\DeclareMathOperator{\diver}{div}
\DeclareMathOperator{\IM}{Im}
\def\eps{\varepsilon}
\def\d{{\partial}}
\def\R{{\mathbb R}}
\numberwithin{equation}{section}
\theoremstyle{plain}
\newtheorem{thm}{Theorem}[section]
\newtheorem{defn}[thm]{Definition}
\newtheorem{lem}[thm]{Lemma}
\newtheorem{prop}[thm]{Proposition}
\newtheorem{notat}[thm]{Notation}
\theoremstyle{remark}
\newtheorem{rmk}[thm]{Remark}
\begin{document}

\title{The zero capillarity limit for the Euler-Korteweg system with no-flux boundary conditions}

\author*[1]{\fnm{Paolo} \sur{Antonelli}}\email{paolo.antonelli@gssi.it}
\author*[2]{\fnm{Yuri} \sur{Cacchiò }}\email{yuri.cacchio@univie.ac.at}

\affil[1]{\orgname{Gran Sasso Science Institute}, \orgaddress{\street{Viale Francesco Crispi, 7}, \city{L'Aquila}, \postcode{67100}, \state{Italy}}}

\affil[2]{\orgname{Faculty of Mathematics, University of Vienna}, \orgaddress{\street{Oskar-Morgenstern-Platz 1}, \city{Vienna}, \postcode{1090}, \state{Austria}}}

\maketitle

\begin{abstract}
\ In this article, we study the small dispersion limit of the Euler-Korteweg system in a domain with a smooth boundary and no-flux boundary conditions. We exploit a relative energy approach to study the convergence of finite energy weak solutions towards strong solutions to the compressible Euler system. Given the boundary conditions under consideration, our approach requires a correction for the limiting particle density, due to the appearance of a boundary layer. Unlike conditional result on the vanishing viscosity limit, our analysis does not require additional conditions on the lack of anomalous concentration of capillary energy. This is due to the fact that the boundary layer appearing in our context is weaker than the one formed in the vanishing viscosity limit. We believe this approach can be adapted to study similar singular limits involving non-trivial boundary conditions.
\end{abstract}

\tableofcontents

\section{Introduction}
We study the following Euler-Korteweg (EK) system,
\begin{equation}\label{EK}
\left\{
    \begin{array}{rl}
    \partial_t \rho+\diver (\rho u)&=0;\\
    \partial_t (\rho u)+\diver\left(\rho u\otimes u\right)+\nabla p(\rho)&=\varepsilon^2\rho\nabla\left(\diver(k(\rho)\nabla\rho)-\frac{1}{2}k'(\rho)|\nabla\rho|^2\right),
\end{array}\right.
\end{equation}
posed on a domain $\Omega\subset\R^d$ with a smooth boundary $\partial\Omega$, which may be either bounded or unbounded.
The model \eqref{EK} describes the time evolution of a compressible, inviscid flow subject to self-consistent capillary forces in terms of its particle density $\rho=\rho(t, x)$ and velocity field $u=u(t, x)$. 
For simplicity, we assume the pressure to satisfy a barotropic power-type equation of state, namely 
\begin{equation}\label{pressure}
    p(\rho)=\rho^\gamma \text{ with }\gamma>1.
\end{equation}
 The term on the right-hand side of the momentum equation represents the Korteweg stress tensor and accounts for capillary effects in the model \cite{1573105975105495040, MR775366}.

The capillarity coefficient $k=k(\rho)\geq0$ is given and we assume it satisfies the following equation of state 
\begin{equation}\label{capillarity}
    k(\rho)=c_\alpha\rho^\alpha, \qquad c_\alpha>0, \, \alpha\geq-1.
\end{equation}
The Korteweg term was originally introduced in \cite{1573105975105495040} to describe capillary effects in liquid-vapor mixtures with diffuse interfaces, and it was derived later in \cite{MR775366}. We also refer to \cite{MR2760987}, where the derivation is obtained without introducing the concept of interstitial working. Similar models also arise in different physical contexts. For instance, equations \eqref{EK} with $k(\rho)=\frac{1}{4\rho}$ play a prominent role in the description of quantum fluids such as Bose-Einstein condensates or superfluid Helium \cite{pitaevskii2016bose}. With this choice of the capillarity coefficient, equations \eqref{EK} are usually called the \emph{quantum hydrodynamics} (QHD) system. For a more detailed discussion on the physical motivations and the derivation of models, we refer to \cite{MR775366, MR2760987, MR3916773} for general EK systems, and to \cite{AM_K} for QHD.

Our main interest lies in the analysis of the vanishing capillarity limit $\eps\to0$ for a class of initial-boundary value problems (IBVP) associated to \eqref{EK}. More precisely, we complement system \eqref{EK} with initial data
\begin{equation}\label{ID}
\rho|_{t=0}=\rho_0,\qquad (\rho u)|_{t=0}=J_0,
\end{equation}
and no-flux boundary conditions,
\begin{equation}\label{BC}
u\cdot n|_{\partial\Omega}=0,\qquad\partial_n\rho|_{\partial\Omega}=0.
\end{equation}
We denote by $n$ the unit outward normal. 
In the case when $\Omega$ is unbounded, we assume $\rho\to0$ and $\rho u\to0$, as $|x|\to\infty$.

In many physical contexts, the constant $\eps>0$ appearing in front of the Korteweg stress tensor \eqref{Korteweg term} is relatively small (see for instance \cite{jamet}). Hence, it is relevant from the applications' point of view to investigate the limit when $\eps\to0$.

Formally, we expect the asymptotic dynamics to be governed by the compressible Euler system
\begin{equation}\label{Euler}
\left\{
    \begin{array}{rl}
    \partial_t  \rho^E +\diver ( \rho^E  u^E) &=0;\\
    \partial_t(  \rho^E  u^E) +\diver\left( \rho^E  u^E \otimes  u^E \right)+\nabla p( \rho^E ) &=0,
\end{array}\right.
\end{equation}
complemented with the following boundary conditions,
\begin{equation}\label{BC_E}
u^E\cdot n|_{\partial\Omega}=0.
\end{equation}
However, due to the mismatch of boundary conditions between \eqref{BC} and \eqref{BC_E}, the formation of a boundary layer is expected in the limit $\eps\to0$.

Our main result provides a rigorous justification of the above heuristics in several aspects. First of all, we show the convergence of weak solutions to \eqref{EK}-\eqref{BC} towards solutions to \eqref{Euler}-\eqref{BC_E} in the natural topology determined by the energy associated to the limiting system \eqref{Euler}. This is achieved by a relative energy argument. Let us notice that this first result is insensitive to the phenomenon of boundary layer formation mentioned before, as for the mass density it only provides convergence in suitable Lebesgue spaces. 

Given the passage from boundary conditions \eqref{BC} to \eqref{BC_E}, we need to investigate the convergence of the gradient of the particle density close to the boundary. This analysis is performed by considering a second relative energy functional, associated to the total energy for the original system \eqref{EK}. In particular, the contribution from the capillary energy may be interpreted as a kinetic energy associated to an auxiliary velocity field
\begin{equation*}
v=v(\rho)=\sqrt{c_\alpha}\nabla\rho^{\frac{\alpha+1}{2}},
\end{equation*}
see \eqref{eq:en_EK_kinetic} below.
In this way, the second boundary condition in \eqref{BC} may be formally interpreted as a no-flux condition on $v$, and its mismatch in the limiting dynamics requires the introduction of a boundary layer correction for $v(\rho^E)$. Roughly speaking, our approach is similar in spirit to Kato's seminal paper on the vanishing viscosity limit \cite{Kato}, see also \cite{MR3171346} for the compressible case. There are, however, some main differences. First, while in \cite{Kato} the singular limit involves conditions on the energy dissipation, in our case the original system \eqref{EK} is conservative. What we need to avoid is that the contribution from the capillary energy concentrates close to the boundary. However, while the size of the boundary layer in \cite{Kato} is determined by the relative energy argument, we do not need further conditions on the lack of capillary energy concentration close to the boundary. This reflects the fact that the zero capillarity limit generates a weaker boundary layer compared to the one appearing for vanishing viscosity limits.

\subsection{Survey of related results}\label{Sec survey of}
The Cauchy problem \eqref{EK}-\eqref{ID} posed on $\Omega=\R^d$ is known to be locally well-posed for $(\rho, u)\in (\bar\rho, \bar u)+H^{s+1}\times H^s$, with $s>\frac{d}{2}+1$, where $(\bar\rho, \bar u)\in \R_+\times \R^d$ is a reference homogeneous solution \cite{BGDD}. Global dispersive solutions may be obtained for irrotational flows under more restrictive assumptions on the initial data, see \cite{AH}. For general solutions, the time of existence is estimated in terms of the rotational part of the velocity field \cite{MR4401388}.

The problem is also studied on periodic domains \cite{BMM, FIM}, where it is possible to prove non-uniqueness results by relying upon convex integration techniques \cite{DFM_ill}. In this context, the vanishing capillarity limit is studied in \cite{MR3647257} by a relative energy argument, under specific assumptions on the pressure and the capillarity term. A similar relative-energy approach is also exploited to analyze the viscous counterpart of system \eqref{EK}, given by the so-called Navier-Stokes-Korteweg (NSK) system. In \cite{BGLV}, for instance, the authors construct dissipative solutions to \eqref{EK} as vanishing viscosity limit of global-in-time weak solutions to the NSK system. Moreover, the vanishing capillarity limit of NSK towards the compressible Navier-Stokes equations may also be proved by a compactness argument \cite{BDL}, owing to the extra dissipative estimates available in this case. Similar arguments yield results on different singular limits, see for instance \cite{ACLS, BCMNX, CCL, carnevale2025extending}.

The QHD system, obtained by choosing $k(\rho)=\frac{1}{4\rho}$ in \eqref{EK}, plays a special role in the theory, being related to the following nonlinear Schr\"odinger (NLS) equation
\begin{equation}\label{eq:NLS}
i\eps\d_t\psi=-\frac{\eps^2}{2}\Delta\psi+\frac{\gamma}{\gamma-1}|\psi|^{2(\gamma-1)}\psi,
\end{equation}
via the so-called Madelung transformations \cite{madelung1927quantentheorie}. More precisely, Madelung's approach derives system \eqref{EK} as the evolution equations for the momenta associated to $\psi$, by defining $\rho=|\psi|^2$, $\rho u=\eps\IM(\bar\psi\nabla\psi)$. For this reason, more results are available for the QHD than for general EK systems. In particular, it is possible to show the existence of global-in-time, finite energy weak solutions, see \cite{AM_CMP, AM_ARMA, AMZ_CMP} where the authors also establish a rigorous justification of the Madelung transformation for general, finite energy wave functions.
Notice that the hydrodynamic boundary conditions \eqref{BC} correspond to homogeneous Neumann conditions for the wave function, i.e.
\begin{equation}\label{BC_NLS}
\partial_n\psi|_{\partial\Omega}=0.
\end{equation}
Equation \eqref{eq:NLS} also highlights the relation between the vanishing capillarity limit for QHD and the semiclassical limit for the NLS equations, for which an extensive literature already exists \cite{CR,MR1425123,MR4274579}. Most of the existing literature considers the vanishing capillarity/semiclassical limit problem in the whole space. A restricted number of results are dedicated to domains with non-trivial boundaries. In \cite{LZ}, by a modulated energy approach, the semiclassical limit for \eqref{eq:NLS}-\eqref{BC_NLS} with $\gamma=2$ is proven in a two-dimensional exterior domain. The same problem posed on the half-space is considered in \cite{CR}, where the authors study the expansion of the solution $\psi^\eps$ in powers of $\eps$ by means of the WKB approach, calculating the boundary layer expansion. Their analysis shows that a weak boundary layer of spatial size $O(\eps)$ is formed. A similar result is also obtained with Dirichlet boundary conditions \cite{MR4379626}, where an extra smallness condition on initial and boundary data is needed in order to deal with the stronger boundary layer.

In the recent paper \cite{audiard2025zeroA}, the authors study the vanishing dispersion limit for the Euler-Korteweg system. In particular, they perform a semiclassical analysis for the QHD system on the half-plane with hybrid boundary conditions.

An alternative approach to study the vanishing capillarity limit for \eqref{EK} is to derive suitable time-decay properties for the linearized operator and show uniform estimates in the parameter $\eps>0$, see the interesting analysis performed in \cite{song2025global}. We also refer to \cite{MR3196944}, where the same problem is studied in the viscous setting.

Our strategy of proof exploits a relative energy argument. This is a well-established tool for studying PDEs, where it is used to prove results about stability \cite{Daf, dafermos1979stability}, weak-strong uniqueness \cite{MR523630, MR2992037}, and singular limits \cite{MR1748352, MR3594360, MR1944031, carnevale2025extending}. In our paper, we study a relative energy functional as in \cite{BGLV,MR3647257} to analyze the singular limit from solutions to \eqref{EK} towards solutions to \eqref{Euler}.

\subsection{Main contributions of the paper}
Our paper exploits a relative-energy method to show the convergence of finite energy weak solutions to \eqref{EK}-\eqref{BC} towards strong (classical) solutions to \eqref{Euler}-\eqref{BC_E}. A similar approach was considered in \cite{MR3647257} for the problem posed in a periodic domain, but their result holds under restrictive conditions on the capillarity coefficient and the pressure law. We allow for a more general class of Euler-Korteweg systems, and moreover, we are able to consider domain with non-trivial boundary conditions.

This is mainly due to the different structure of the relative energy functional we consider. In the higher order energy $\mathcal E_h$, that will be introduced in \eqref{eq:aug_rel} below, the capillary energy is interpreted as the kinetic energy associated to an auxiliary velocity field defined as in \eqref{def:v}. This is reminiscent of the augmented formulation of Euler-Korteweg and Navier-Stokes-Korteweg models \cite{MR3734208}. A similar relative energy was introduced in \cite{BGLV} to study the vanishing viscosity limit for the Navier-Stokes-Korteweg system, see also the recent preprint \cite{carnevale2025extending}.

By exploiting the Madelung transform, it is possible to interpret the results in \cite{LZ, CR} as a zero capillarity limit for \eqref{EK}-\eqref{BC} with $k(\rho)=\frac{1}{4\rho}$. The present analysis extends these results to a more general class of Euler-Korteweg systems. Our analysis requires less regularity on the initial data \eqref{ID} and applies to general finite energy weak solutions to \eqref{EK}. Moreover, by considering the extended relative energy $\mathcal E_h$, in Theorem \ref{main high} we are able to prove also the convergence of the mass density gradient by taking into account the correction due to the boundary layer formation. Our result yields rougher information than the full boundary layer expansion derived in \cite{CR}, however, the present analysis holds under more general assumptions, especially concerning the regularity of the solutions.

\section{Preliminaries and statement of the main results}
In this Section, we recall some preliminary results, introduce the basic notions needed throughout our paper and state the main results.

First of all, we notice that the Korteweg term may be written in divergence form as follows,
\begin{equation}\label{Korteweg term}
\varepsilon^2\rho\nabla\left(\diver(k(\rho)\nabla\rho)-\frac{1}{2}k'(\rho)|\nabla\rho|^2\right)
=\varepsilon^2\diver\mathbb K,
\end{equation}
where the Korteweg stress tensor $\mathbb K$ is given by
\begin{equation}\label{eq:K_stress}
\mathbb{K}=\left(\diver(\rho k(\rho)\nabla\rho)-\frac{1}{2}(\rho k'(\rho)+k(\rho)|\nabla\rho|^2)\right)\mathbb{I}-k(\rho)\nabla\rho\otimes\nabla\rho,
\end{equation}
and $\mathbb I$ is the identity matrix.

The total energy associated to the Euler-Korteweg system \eqref{EK} is defined by
\begin{equation}\label{eq:en_EK}
E_{EK}=\int_{\Omega}\frac12\rho|u|^2+f(\rho)
+\frac{\eps^2}{2}k(\rho)|\nabla\rho|^2\,dx.
\end{equation}
The first contribution in \eqref{eq:en_EK} is the kinetic energy, whereas the second one is given by the internal energy density, defined as
\begin{equation}\label{general case}
    f(\rho)=\rho\int^\rho_0\frac{p(s)}{s^2}ds.
\end{equation}
Equivalently, we also have $p(\rho)=\rho f'(\rho)-f(\rho)$.
Notice that in the particular case under our consideration $p(\rho)=\rho^\gamma$, with $\gamma>1$, we have
\begin{equation}\label{eq:int_en_d}
f(\rho)=\frac{1}{\gamma-1}\rho^\gamma.
\end{equation}
Finally, the last term in \eqref{eq:en_EK} accounts for the contribution from the capillary energy. The total energy $E_{EK}$ is formally conserved along the flow \eqref{EK}. No dissipative effects are encoded in system \eqref{EK} and the only natural a priori estimates on $(\rho, u)$ are the ones given by \eqref{eq:en_EK}.

In particular, the only available bound on $u$ is in $L^2(\Omega;\rho\,dx)$, which implies that no control on the velocity field is available in the vacuum region $\{\rho=0\}$. Since we consider global-in-time finite energy weak solutions, we are interested in allowing for the mass density to vanish. For this reason, it seems more convenient to study the system by considering the mass and momentum densities $\rho$ and $J=\rho u$, respectively. In this way, the total energy becomes
\begin{equation}\label{total energy}
E_{EK}=\int_{\Omega}\frac12\frac{|J|^2}{\rho}+\frac{1}{\gamma-1}\rho^\gamma
+\frac{\eps^2}{2}k(\rho)|\nabla\rho|^2\,dx.
\end{equation}
This approach is customary for inviscid compressible models such as the compressible Euler system, see \cite{feireisl2025well} for instance.
For $(\rho, J)$ to be of finite energy, we must have that $J=0$ a.e. on $\{\rho=0\}$. In fact, for the purpose of our analysis, it is convenient to denote the \emph{pseudovelocity} field
\begin{equation}\label{eq:Lambda}
\Lambda=\frac{J}{\sqrt{\rho}}=\sqrt{\rho}u,
\end{equation}
for which the energy bound yields $\Lambda\in L^\infty_tL^2_x$. The vector field $\Lambda$ does not have a clear physical interpretation. However, its notion turns out to be useful to provide a rigorous definition of finite energy weak solutions to \eqref{EK}.

Analogously, it is convenient to define the following auxiliary state functions,
\begin{align}
    K(\rho)&=\int_0^\rho sk(s)ds,\label{K}\\
    \beta(\rho)&=\int_0^\rho\sqrt{k(s)}ds.\label{beta}
\end{align}
In this way, from \eqref{eq:K_stress} we write the Korteweg term as follows
\begin{equation*}
\diver\mathbb K = \nabla\left(\Delta K(\rho)
-\frac12K''(\rho)|\nabla\rho|^2\right)-\diver\Big(\nabla\beta(\rho)\otimes\nabla\beta(\rho)\Big).
\end{equation*}
Our choice of the capillarity coefficient \eqref{capillarity} implies that
\begin{equation}\label{eq:cap_coeff}
\beta(\rho)=\frac{2\sqrt{c_\alpha}}{2+\alpha}\rho^{1+\frac\alpha2},\quad
K(\rho)=\frac{c_\alpha}{2+\alpha}\rho^{2+\alpha}.
\end{equation}
The capillary energy may be written as 
\begin{equation}
    \frac12\int_{\Omega}k(\rho)|\nabla\rho|^2\,dx=\frac12\int_{\Omega}|\nabla\beta(\rho)|^2\,dx.
\end{equation}
Thus, a finite energy state satisfies $\nabla\beta(\rho)\in L^2$. Moreover, our assumption \eqref{capillarity} implies that there exists a constant $\omega>0$ such that 
\begin{equation}\label{capillarity ineq}
    |\rho k'(\rho)|\leq\omega k(\rho),
\end{equation}
that in turn yields
\begin{equation}\label{eq:cap_bound}
|K''(\rho)||\nabla\rho|^2\lesssim k(\rho)|\nabla\rho|^2=|\nabla\beta(\rho)|^2\in L^1.
\end{equation}
Resuming, we rewrite the Euler-Korteweg system \eqref{EK} as follows,
\begin{equation}\label{EK_w}
\left\{\begin{aligned}
&\d_t\rho+\diver J=0\\
&\d_tJ+\diver\left(\Lambda\otimes\Lambda\right)+\nabla p(\rho)
=\eps^2\nabla\left(\Delta K(\rho)-\frac12K''(\rho)|\nabla\rho|^2\right)-\eps^2\diver(\nabla\beta(\rho)\otimes\nabla\beta(\rho)),
\end{aligned}\right.
\end{equation}
where $\Lambda$ is such that $J=\sqrt{\rho}\Lambda$. Notice that all terms in \eqref{EK_w} make sense in the distributional sense for finite energy states. Moreover, when testing system \eqref{EK_w} against suitably regular functions, we see that the boundary conditions \eqref{BC} are generalized as follows,
\begin{equation}\label{BC_w}
\d_n\beta(\rho)|_{\d\Omega}=0,\qquad\Lambda\cdot n|_{\d\Omega}=0.
\end{equation}
Again, this is dictated by the possible appearance of vacuum regions in the problem, which prevents to uniquely define the velocity field.
\begin{defn}[Weak solutions]\label{def: WS-EK}
Let $p(\rho),k(\rho)$ be given by \eqref{pressure}-\eqref{capillarity}, respectively, and let $T>0$. Let the initial data $(\rho_0, J_0)\in L^1(\Omega;\R^{1+d})$ be such that
\begin{equation*}
    E_0 = \int_\Omega \left( \frac12\frac{|J_0|^2}{\rho_0} + \frac{1}{\gamma-1}\rho_0^\gamma + \frac{\varepsilon^2}{2}k(\rho_0)|\nabla\rho_0|^2 \right) dx < \infty.
\end{equation*}
We say that $(\rho,J)$ is a \emph{finite energy weak solution} to the IBVP \eqref{EK_w}, \eqref{ID}, \eqref{BC_w} on $[0, T]\times\Omega$, if there exists a pair $(\sqrt{\rho},\Lambda)$ of functions such that 
\begin{enumerate}

    \item $\sqrt{\rho}\in L^\infty([0,T]; L^2(\Omega)\cap L^{2\gamma}(\Omega)) $, and $\Lambda\in L^\infty([0,T]; L^2(\Omega))$;
    \item by defining $\beta=\beta(\rho)$ and $K=K(\rho)$ as in \eqref{eq:cap_coeff}, we have 
    $\nabla\beta(\rho)\in L^\infty([0,T]; L^2(\Omega))$ and $\nabla K(\rho)\in L^1([0,T]\times\Omega)$;
    
    \item by defining $\rho=(\sqrt{\rho})^2$, $J=\sqrt{\rho}\Lambda$,  we have $\rho\in C_{weak}([0,T]; L^1(\Omega)\cap L^{\gamma}(\Omega)) $, $J \in C_{weak} ( [0,T] ; L^{2\gamma/(\gamma+1)}(\Omega))$ and the integral identities 
    \begin{align}
        \int_{\Omega} \rho\psi \,dx\bigg|_{t=0}^T&=\int_0^T\int_{\Omega}\rho\partial_t\psi+J\cdot\nabla\psi \,dxdt,\label{weak1}\\
        \int_{\Omega}J\cdot\phi \,dx\bigg|_{t=0}^T=\int_0^T\int_{\Omega}&J\cdot\partial_t\phi+\Lambda\otimes\Lambda:\nabla\phi+p(\rho)\diver\phi +\varepsilon^2\nabla\beta(\rho)\otimes\nabla\beta(\rho):\nabla\phi\label{weak2}\\+&\frac{\varepsilon^2}{2}K''(\rho)|\nabla\rho|^2\diver\phi +\varepsilon^2\nabla K(\rho)\cdot\nabla(\diver\phi) \,dxdt\notag.
    \end{align}
    hold for every pair of test functions $(\psi,\phi)\in C^1([0,T]; C^1_c(\overline{\Omega})\cross C^2_c(\overline{\Omega})^d)$ such that $\phi\cdot n|_{\partial\Omega}=0$.
    \item Moreover, the following energy inequality holds for a.e. $\tau\in(0,T)$,
    \begin{equation}\label{energy inequality}
    E_{EK}(\tau) = \int_{\Omega} \left(\frac{1}{2}|\Lambda|^2 + \frac{\varepsilon^2}{2}|\nabla\beta(\rho)|^2 + f(\rho) \right)(\tau,x)dx \leq E_0.
\end{equation}
\end{enumerate}
We say that $(\rho, J)$ is a \emph{global-in-time} finite energy weak solution, if the above definition holds for any arbitrary $0<T<\infty$.
\newline
Moreover, we say that $(\rho, J)$ is a \emph{dissipative weak solution} if, in addition to point (1) -- (4) above, we have 
\begin{equation}
    \frac{d}{dt}E_{EK}(t)\leq 0 \quad \text{in } \mathcal D'((0, T)).
\end{equation}
\end{defn}
\begin{rmk}
We emphasize that our notion of \emph{dissipative weak solution} refers to a finite energy weak solution in the sense of distributions that additionally satisfies the global energy dissipation inequality $\frac{d}{dt}E_{EK}(t) \leq 0$. This should not be confused with the weaker concept of ``dissipative solutions'' used to Euler-Korteweg and quantum fluid systems (see, e.g., \cite{MR3961293}).
\end{rmk}
We now consider the limiting dynamics, resulting in 
the Euler system \eqref{Euler}, 
equipped  with the boundary condition \eqref{BC_E}
and initial data
\begin{equation}\label{initial data E}
     \rho^E|_{t=0}=\rho^E _0, \quad u^E|_{t=0}=u^E _0.
\end{equation}
The local-in-time well-posedness of the compressible Euler system is extensively studied via the theory of symmetric hyperbolic systems. The following theorem recalls a standard existence result for local-in-time strong solutions, specifically adapted from \cite{Makino1985}. For a general overview of the classical well-posedness theory, we also refer the reader to \cite{MR616950, MR834481, MR623940, BenzoniSerre}.

\begin{thm}\label{theo sol euler}
    Let $\Omega\subset \mathbb{R}^d$ be an open set with smooth boundary $\partial \Omega$. Let $( \rho^E _0, u^E _0)\in H^{s}(\Omega)$ be the initial data, with $s>\frac{d}{2}+k$ and $k\geq 1$, such that the compatibility conditions $\partial_t^j u^E\cdot n|_{t=0,\partial\Omega} = 0$ hold for $ j = 0, \dots, s-1$. Assume that:
    \begin{enumerate}
        \item[(i)] either $1<\gamma\leq\frac{s+2}{s}$;
        \item[(ii)] or $\Omega$ is bounded, $1<\gamma<\infty$, and $\inf_{x\in\Omega}\rho_0^E>0$.
    \end{enumerate}
    Then, there exists $T^*>0$ and a unique classical solution 
    \begin{equation*}
        ( \rho^E , u^E )\in C([0,T^*];H^{s}(\Omega))\cap C^1([0,T^*];H^{s-1}(\Omega))
    \end{equation*}
    of \eqref{Euler}, \eqref{BC_E}, \eqref{initial data E}. In particular, by Sobolev embeddings, 
    \begin{equation}
        (\rho^E, u^E) \in C([0,T^*];C^k(\overline{\Omega})).
    \end{equation}
\end{thm}

\begin{rmk}
We point out a few important facts concerning the well-posedness framework adopted above:
\begin{itemize}
    \item \textbf{Absence of vacuum:} If the initial density is bounded away from zero, i.e., $\inf_{x\in\Omega}\rho_0^E>0$, then the upper bound on the index $\gamma$ is not required. The local existence of classical solutions holds for any $\gamma>1$, as the singularity of the transformation map at the vacuum state is never reached. Since the solution is continuous in time, the strict positivity is preserved on the interval of existence $[0, T^*]$.
    \item \textbf{Dimensionality:} Although the original result in \cite{Makino1985} was stated for $d=3$, the theorem extends naturally to any dimension $d \geq 1$. The symmetrization process relies on the algebraic structure of the Euler equations, which is dimension-independent, and the resulting system remains symmetric hyperbolic. 
    \item \textbf{Boundary conditions:} The no-flux boundary condition \eqref{BC_E} is maximal dissipative (conservative). Indeed, following \cite{Makino1985} for isentropic flows $p(\rho)=\rho^\gamma$, one introduces the variable $w=\rho^{(\gamma-1)/2}$ and rewrites \eqref{Euler} as $A_0(U)\partial_t U+\sum_{j=1}^d A_j(U)\partial_{x_j} U=0$, where $U=(w,u_1,...,u_d)$. The boundary matrix is defined as $A_n(U)=\sum_{j=1}^dA_j(U)n_j$. Evaluated at $\partial \Omega$, we have,
    \begin{equation*}
        A_n(U)|_{\partial \Omega}=\begin{pmatrix}
            0 & \frac{\gamma-1}{2}w n^t\\
            \frac{\gamma-1}{2}w n & 0
        \end{pmatrix}.
    \end{equation*}
    The energy at the boundary is controlled by the quadratic form $U^TA_n(U)U = (\gamma-1)w^2(u\cdot n)$, which vanishes on $\partial\Omega$ due to \eqref{BC_E}. Since the system is symmetric and the boundary condition is maximal dissipative, well-posedness follows from the classical theory for characteristic boundaries (see \cite[Chapter 3]{BenzoniSerre}), without the need to verify the general Kreiss-Lopatinskii condition.
\end{itemize}
\end{rmk}
Let us remark that for the solutions considered in Theorem \ref{theo sol euler}, by combining the two equations in \eqref{Euler}, the following evolutionary equation for the velocity field may be obtained for solutions satisfying $\rho^E>0$,
\begin{equation}\label{eq:Euler_v}
\partial_tu^E+u^E\cdot\nabla u^E+\nabla f'(\rho^E)=0,
\end{equation}
where $f'(\rho)$ is the derivative of the internal energy density with respect to its argument, see \eqref{general case}.

\subsection{Relative energy functionals}

We now introduce the relative energy functionals we are going to consider in this paper. The relative energy (or relative entropy) method is a well-established tool in the stability analysis and singular limits of partial differential equations, originating from classical works on conservation laws \cite{Daf, dafermos1979stability}. For recent applications of this approach specifically tailored to Euler-Korteweg and Navier-Stokes-Korteweg systems, we refer the reader to \cite{MR3961293, BGLV, MR3647257, MR3594360}.

Let $(\rho, J)$ be a finite energy weak solution to \eqref{EK} in the sense of Definition \ref{def: WS-EK}. Given a pair of sufficiently smooth test functions $(r, U)$, we define the relative energy functional as follows,
\begin{equation}\label{def:rel}
\mathcal E(\rho, J | r, U) = \int_{\Omega}\frac{1}{2}|\Lambda-\sqrt{\rho} U |^2+f(\rho| r )+\frac{\varepsilon^2}{2}|\nabla\beta(\rho)|^2dx
\end{equation}
where the relative internal energy density is defined by
\begin{equation}\label{eq:rel_int_en}
           f(\rho|r)=f(\rho)-f'( r )\cdot(\rho- r )-f( r ).
\end{equation}
Given our specific choice of the internal energy density, see \eqref{eq:int_en_d}, the relative internal energy density reads
\begin{equation*}
f(\rho|r)=\frac{1}{\gamma-1}\rho^\gamma+r^\gamma-\frac{\gamma}{\gamma-1}r^{\gamma-1}\rho.
\end{equation*}
By using the above definitions \eqref{total energy}, \eqref{def:rel}, \eqref{eq:rel_int_en} we check that the following identity holds
\begin{equation}\label{eq:rel_sp}\begin{aligned}
\mathcal E= E_{EK} -\int_{\Omega}J\cdot U+\rho\left(-\frac12|U|^2+f'(r)\right)\,dx +\int_{\Omega}p(r)\,dx.
\end{aligned}\end{equation}
where $p=p(r)$ is defined in \eqref{pressure}.

In the case of the QHD system with quadratic pressure, namely when $k(\rho)=\frac{1}{4\rho}$ and $\gamma=2$, we now show that, by exploiting the Madelung transformation, it is possible to establish an analogy between two functionals. The first is the functional 
\begin{equation}\label{eq:rel_GP}
    H^\eps(t)=\int_{\Omega}\frac12|(i\eps\nabla+U)\psi^\eps|^2+\frac12\left(|\psi^\eps|^2-\rho\right)^2\,dx,
\end{equation}
studied in \cite{LZ}, see formula (12) therein, for the semi-classical limit of the Gross-Pitaevskii equation. The second is the relative energy functional introduced in \eqref{def:rel}.
Notice that, by \eqref{eq:cap_coeff}, we have $\beta(\rho)=\sqrt{\rho}$.

To rigorously justify this analogy, we briefly recall the polar factorization approach that provides a rigorous justification of the Madelung transformation for arbitrary finite energy wave functions $\psi$. We address the reader to \cite[Section 1.5.1]{AM_K}, for a more comprehensive exposition.

For any $\psi\in H^1(\Omega)$, we define the set of polar factors as
\begin{equation*}
P(\psi)=\{\varphi\in L^\infty(\Omega)\,:\,\psi=|\psi|\varphi\quad\textrm{a.e. on}\;\Omega\}.
\end{equation*}
Since the mass and momentum densities associated to $\psi^\eps$ are given by
\begin{equation*}
\rho=|\psi^\eps|^2,\qquad J=\eps\IM(\bar\psi^\eps\nabla\psi^\eps),
\end{equation*}
respectively, and given the relation \eqref{eq:Lambda} between $J$ and the pseudovelocity $\Lambda$, we define
\begin{equation*}
\Lambda=\eps\IM(\bar\varphi^\eps\nabla\psi^\eps),
\end{equation*}
where $\varphi^\eps\in P(\psi^\eps)$.
\newline
The definition of the hydrodynamic state $(\sqrt{\rho}, \Lambda)=(|\psi|, \eps\IM(\bar\varphi\nabla\psi))$ achieved by means of the polar factorization allows to construct a finite energy weak solution to the QHD system, see \cite{AM_CMP, AM_ARMA, AMZ_CMP}. The main advantage of this approach is that the quantities $(\sqrt{\rho}, \Lambda)$ are defined a.e. in $\Omega$, thus avoiding the definition of the velocity field  $u$ that may become singular close to the vacuum region $\{\rho=0\}$.

\begin{prop}\label{lem:GP}
Let $\psi\in\mathcal C(\R; H^1(\Omega))$ be a (mild) solution to the NLS equation \eqref{eq:NLS} and let us define
\begin{equation*}
\sqrt{\rho}=|\psi|, \qquad\Lambda=\IM(\bar\varphi\nabla\psi),
\end{equation*}
where $\varphi\in P(\psi)$. Then $(\rho, J)=((\sqrt{\rho})^2, \sqrt{\rho}\Lambda)$ is a finite energy weak solution to the QHD system \eqref{EK} with $\alpha=-1$, $c_\alpha=\frac14$. Moreover, the following identity holds for any $t\in\R$,
\begin{equation*}
H^\eps=\mathcal E_{QHD}(\rho, J|r, U)=
\int_{\Omega}\frac12|\Lambda-\sqrt{\rho}U|^2+f(\rho|r)+\frac{\eps^2}{2}|\nabla\sqrt{\rho}|^2\,dx,
\end{equation*}
where $H^\eps$ is defined in \eqref{eq:rel_GP}.
\end{prop}
For further details, we also refer the reader to \cite[Chapter 3]{MR4274579}.\\
Our first main theorem provides a convergence result of finite energy weak solutions to \eqref{EK} towards strong solutions to \eqref{Euler}. This is achieved by deriving a Gr\"onwall-type estimate on the relative energy functional $\mathcal{E}$ defined in \eqref{def:rel}. In view of Proposition \ref{lem:GP} above, the following theorem may be seen as the analogue of the main result in \cite{LZ} for the QHD system. However, our analysis applies to a larger range of the capillarity coefficient and considers more general domains in arbitrary space dimensions. We emphasize that the rigorous derivation of the Gr\"onwall estimate requires the limiting velocity field to possess at least $\mathcal{C}^2$ spatial regularity, which determines the assumptions on the initial data for the Euler system \eqref{Euler}.

\begin{thm}\label{main}
Let $(\rho^\eps, J^\eps)$ be a dissipative weak solution to the IBVP \eqref{EK}, \eqref{ID}, \eqref{BC} in the sense of Definition \ref{def: WS-EK}. Let $(\rho^E, u^E)$ be a strong solution to \eqref{Euler}, \eqref{BC_E}, \eqref{initial data E} given by Theorem \ref{theo sol euler} such that $(\rho^E, u^E)\in C([0, T^*]; C^2(\overline{\Omega}))$, for some $T^*>0$. 
Let us further assume that the initial data satisfy
\begin{equation}\label{condizione sui dati iniziali}
\mathcal E(\rho_0^\varepsilon,J_0^\varepsilon| \rho^E _0, u^E _0) \to 0, \qquad \text{as } \eps \to 0.
\end{equation}
Then, as $\eps\to0$, we have
\begin{equation}\label{convergenza}
\norm{\rho^\varepsilon- \rho^E }_{L^\infty([0,T^*];L^1\cap L^\gamma(\Omega))}+\norm{\Lambda^\varepsilon-\sqrt{\rho^\varepsilon} u^E }_{L^\infty([0,T^*];L^2(\Omega))}\to 0.
\end{equation}
Consequently, the following convergence holds for the momentum density:
\begin{equation}\label{convergenza J}
\lim_{\eps\to0}\|J^\varepsilon -  \rho^E  u^E\|_{L^\infty([0, T^*]; L^1(\Omega))}=0.
\end{equation}
\end{thm}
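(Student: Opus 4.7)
The plan is to exploit the algebraic identity \eqref{eq:rel_sp}, which expresses $\mathcal E(t)$ as the total energy $E_{EK}(t)$ plus explicit terms polynomial in $(\rho,J)$ tested against $(\rho^E, u^E)$. Since \eqref{energy inequality} gives $E_{EK}(t)\leq E_{EK}(0)$, the task reduces to evaluating the time evolution of the remaining terms via the weak formulations \eqref{weak1}--\eqref{weak2}, combined with the strong form of the Euler dynamics. The admissibility of $\phi=u^E$ in \eqref{weak2} is granted by \eqref{BC_E}, and the admissibility of $\psi=\tfrac12|u^E|^2-f'(\rho^E)$ in \eqref{weak1} by the regularity provided by Theorem \ref{theo sol euler}.

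Substituting $(\psi,\phi)$ into \eqref{weak1}--\eqref{weak2}, replacing $\partial_t u^E$ and $\partial_t\rho^E$ via \eqref{eq:Euler_v} and the continuity equation, and rearranging, one recombines the cross terms into relative quantities. For instance, $\Lambda\otimes\Lambda:\nabla u^E$ together with the advective contribution yields the quadratic $(\Lambda-\sqrt{\rho}u^E)\otimes(\Lambda-\sqrt{\rho}u^E):\nabla u^E$, up to lower-order terms absorbed into other summands of $\mathcal E$, while the pressure terms rebuild $f(\rho|\rho^E)\diver u^E$. The resulting inequality reads
\begin{equation*}
\mathcal E(t) \leq \mathcal E(0) + \int_0^t C(s)\,\mathcal E(s)\,ds + \mathcal R_\eps(t),
\end{equation*}
where $C(s)$ depends on $\|\nabla u^E\|_{L^\infty}$ and the pointwise bounds on $\rho^E$ from Theorem \ref{theo sol euler}, while $\mathcal R_\eps(t)$ gathers the capillary contributions generated by $\phi=u^E$ in \eqref{weak2}.

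The key technical step is bounding $\mathcal R_\eps(t)$. The integrands $\eps^2\nabla\beta(\rho)\otimes\nabla\beta(\rho):\nabla u^E$ and $\tfrac{\eps^2}{2}K''(\rho)|\nabla\rho|^2\diver u^E$ are pointwise dominated by $C\|\nabla u^E\|_{L^\infty}\,\eps^2|\nabla\beta(\rho)|^2$: for the second, one uses $|K''(\rho)|\lesssim k(\rho)$ — a consequence of \eqref{capillarity ineq} — together with $k(\rho)|\nabla\rho|^2=|\nabla\beta(\rho)|^2$. Both are therefore absorbed into the Grönwall constant via the capillary summand of $\mathcal E$. The remaining term $\eps^2\nabla K(\rho)\cdot\nabla\diver u^E$ is treated by an integration by parts whose boundary contribution vanishes since $\partial_n K(\rho)|_{\partial\Omega}=K'(\rho)\partial_n\rho|_{\partial\Omega}=0$ by \eqref{BC}, leaving $-\eps^2\int_\Omega K(\rho)\Delta\diver u^E\,dx$, of order $O(\eps^2)$ thanks to uniform a priori bounds on $\|K(\rho)\|_{L^1}$ derived from energy, mass conservation and interpolation.

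Once the differential inequality is established, Grönwall yields $\mathcal E(t)\leq (\mathcal E(0)+o_\eps(1))\exp\!\bigl(\int_0^{T^*}C(s)\,ds\bigr)$, which vanishes uniformly on $[0,T^*]$ by \eqref{condizione sui dati iniziali}. The non-negativity of each summand in $\mathcal E$ then produces $\|\Lambda^\eps-\sqrt{\rho^\eps}u^E\|_{L^\infty_t L^2_x}\to 0$, and the standard pointwise lower bound $f(\rho|\rho^E)\gtrsim |\rho-\rho^E|^2\wedge |\rho-\rho^E|^\gamma$, valid because $\rho^E$ is bounded away from $0$ and $\infty$ by Theorem \ref{theo sol euler}, converts the relative internal energy into $\|\rho^\eps-\rho^E\|_{L^\infty_t(L^1\cap L^\gamma)}\to 0$. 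Finally, \eqref{convergenza J} follows from the splitting $J^\eps-\rho^E u^E = \sqrt{\rho^\eps}(\Lambda^\eps-\sqrt{\rho^\eps}u^E)+(\rho^\eps-\rho^E)u^E$, the uniform mass bound on $\rho^\eps$, and $u^E\in L^\infty$, via Cauchy--Schwarz. I expect the bookkeeping of the Korteweg contributions — and in particular the justification of the integration by parts exploiting $\partial_n\rho|_{\partial\Omega}=0$ — to be the principal technical hurdle.
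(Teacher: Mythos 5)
Your overall architecture coincides with the paper's: the identity \eqref{eq:rel_sp}, the test functions $\pm(\tfrac12|u^E|^2-f'(\rho^E))$ and $u^E$ in \eqref{weak1}--\eqref{weak2}, the recombination into $(\Lambda-\sqrt{\rho}u^E)\otimes(\Lambda-\sqrt{\rho}u^E):\nabla u^E$ and $f(\rho|\rho^E)\diver u^E$, Gr\"onwall, and the conversion of $\int f(\rho|\rho^E)\to0$ into $L^1\cap L^\gamma$ convergence (the paper does this last step via a Vitali-type argument, Lemma \ref{lem:convergenza 1_gamma}, rather than the pointwise lower bound you invoke, but both are standard). The treatment of the first two capillary remainders also matches Proposition \ref{stima sul resto}.

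The genuine gap is your handling of the third capillary term $\varepsilon^2\int_\Omega\nabla K(\rho)\cdot\nabla\diver u^E\,dx$. First, the integration by parts you propose produces the boundary term $\varepsilon^2\oint_{\partial\Omega}K(\rho)\,\partial_n(\diver u^E)\,dS$, not one involving $\partial_nK(\rho)$; the Neumann condition $\partial_n\rho|_{\partial\Omega}=0$ is therefore irrelevant here, and $\partial_n\diver u^E$ has no reason to vanish on $\partial\Omega$. (The opposite integration by parts, whose boundary term $\oint\partial_nK(\rho)\,\diver u^E$ would formally vanish, leaves $\Delta K(\rho)$ in the bulk, which is not controlled by the energy; moreover for weak solutions the Neumann condition is only encoded weakly and cannot be used as a pointwise trace identity.) Second, the resulting bulk term $\int K(\rho)\,\Delta\diver u^E$ requires third derivatives of $u^E$, whereas the theorem only assumes $u^E\in C([0,T^*];C^2)$. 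Third, the claimed uniform bound on $\|K(\rho)\|_{L^1}\sim\|\rho^{1+\alpha/2}\|_{L^2}^2$ is not available: by Lemma \ref{lemma: GN inequality} it is controlled by $\|\rho\|_{L^1}^{2a}\|\nabla\beta(\rho)\|_{L^2}^{2b}$, and the energy only bounds $\varepsilon\|\nabla\beta(\rho)\|_{L^2}$, so $\|K(\rho)\|_{L^1}=O(\varepsilon^{-2b})$ and your remainder is $O(\varepsilon^{2-2b})$, not $O(\varepsilon^2)$. The paper avoids all of this by estimating the term directly without integrating by parts: $|\nabla K(\rho)|=\rho\sqrt{k(\rho)}\,|\nabla\beta(\rho)|=\sqrt{c_\alpha}\,\rho^{1+\alpha/2}|\nabla\beta(\rho)|$, so Cauchy--Schwarz gives $|R_5|\lesssim\varepsilon^2\|\nabla\diver u^E\|_{L^\infty}\|\rho^{1+\alpha/2}\|_{L^2}\|\nabla\beta(\rho)\|_{L^2}\lesssim\varepsilon\|\rho^{1+\alpha/2}\|_{L^2}\,\mathcal E^{1/2}$, which is then closed using Lemma \ref{lemma: GN inequality}; this needs only $C^2$ regularity of $u^E$ and no boundary terms. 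You should replace your integration-by-parts step with this direct estimate.
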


\begin{rmk}\label{rmk:rate_of_convergence}
It is possible to obtain an explicit rate of convergence if we assume well-prepared initial data. Suppose that the initial states coincide, namely $\rho^\varepsilon_0 = \rho^E_0$ and $J^\varepsilon_0 = J^E_0$. Provided that the limiting initial density $\rho^E_0$ is sufficiently regular so that $\nabla\beta(\rho^E_0) \in L^2(\Omega)$, the initial relative energy is given by the initial capillary energy,
\begin{equation*}
\mathcal{E}(\rho^\varepsilon_0, J^\varepsilon_0 | \rho^E_0, u^E_0) = \frac{\varepsilon^2}{2} \int_{\Omega} |\nabla\beta(\rho^E_0)|^2 dx = O(\varepsilon^2).
\end{equation*}
Notice that the assumption $\nabla\beta(\rho^E_0) \in L^2(\Omega)$ is satisfied by the initial data fulfilling the assumption of Theorem \ref{theo sol euler}.

Following the estimates in Proposition \ref{stima sul resto}, the remainder term is bounded by $C(\mathcal{E} + \varepsilon^2)$. An application of Gr\"onwall's lemma yields $\mathcal{E}(\tau) = O(\varepsilon^2)$ for all $\tau \in [0, T^*]$. Consequently, this implies an explicit convergence rate of order $O(\varepsilon)$ for the kinetic term $\|\Lambda^\varepsilon - \sqrt{\rho^\varepsilon}u^E\|_{L^\infty([0,T^*]; L^2(\Omega))}$, and analogous rates for the density $\rho^\varepsilon$ relying on the lower bounds of the relative internal energy $f(\rho|\rho^E)$.
\end{rmk}

\begin{rmk}
We point out that Theorem \ref{main} does not contain any claim concerning the existence of weak solutions to the IBVP \eqref{EK_w}, \eqref{ID}, \eqref{BC_w}. To our knowledge, their existence is known only in the special case of QHD systems, i.e., $k(\rho)=\frac{1}{4\rho}$, and in the whole space, namely $\Omega=\R^d$ (see \cite{AM_CMP,AM_ARMA,AM_K} and Section \ref{Sec survey of} for surveyed results concerning also smooth solutions). The global-in-time existence of finite energy weak solutions for general Euler-Korteweg systems on domains with boundaries remains a challenging open problem and will be the subject of future investigations. 
\end{rmk}

Our first main result establishes the convergence of the standard hydrodynamic variables. However, we aim to improve this result by providing more detailed information about the convergence of the mass density gradient, especially close to the boundary. 

Indeed, in the standard relative energy $\mathcal{E}$ defined in \eqref{def:rel}, the kinetic and internal energy terms are expressed as distances between the exact solution and the limiting Euler solution. In contrast, the capillary energy $\frac{\varepsilon^2}{2}|\nabla\beta(\rho)|^2$ appears alone, lacking a reference counterpart from the limiting dynamics. To properly measure the distance between the gradients, we rewrite this term into a relative quantity.

To achieve this, we introduce a second relative energy functional that explicitly accounts for higher-order derivatives. The main idea is to interpret the capillary energy as the kinetic energy of an auxiliary velocity field. This approach is closely related to the augmented formulations for capillary fluids, such as those used by Bresch and coauthors (see e.g., \cite{MR3734208}).

By recalling formulas \eqref{K} and \eqref{beta}, we define the auxiliary velocity field $v$ as follows,
\begin{equation}\label{def:v}
    v(\rho)=\sqrt{\frac{k(\rho)}{\rho}}\nabla\rho =\frac{\nabla \mu(\rho)}{\rho}=\nabla\theta(\rho).
\end{equation}
Notice that, in this way, the capillary energy density may be interpreted as the kinetic energy associated to the velocity field $v(\rho)$, namely it is possible to write the total energy \eqref{eq:en_EK} as
\begin{equation}\label{eq:en_EK_kinetic}
    E_{EK} = \int_\Omega \left( \frac{1}{2}\frac{|J|^2}{\rho} + \frac{\varepsilon^2}{2}\rho |v|^2 + \frac{1}{\gamma-1}\rho^\gamma \right) dx.
\end{equation}
Moreover, we can define an associated auxiliary momentum density $m$ as
\begin{equation}\label{eq:aux_mom}
    m=\rho v(\rho)=\sqrt{\rho k(\rho)}\nabla\rho=\nabla\mu(\rho)=\sqrt{\rho} \nabla\beta(\rho).
\end{equation}
From definition \eqref{def:v}, the state functions $\mu=\mu(\rho)$ and $\theta=\theta(\rho)$ are determined by the relations,
\begin{equation}\label{def:theta}
    \mu'(\rho)=\sqrt{\rho k(\rho)}\quad \text{and} \quad \theta'(\rho)=\sqrt{\frac{k(\rho)}{\rho}}.
\end{equation}
We note that under our specific assumption for the capillarity coefficient \eqref{capillarity}, we have 
\begin{equation}
    \mu(\rho)=c_\alpha\frac{2}{3+\alpha}\rho^{(\alpha+3)/2},
\end{equation} which implies that $\sqrt{\rho}\nabla\mu'(\rho)\in L^2(\Omega)$. 

Notice that this integrability property holds true under the more general structural assumption \eqref{capillarity ineq}. Indeed, by expanding the derivative, we can bound it directly using the finite capillary energy:
\begin{equation}\label{eq:integrab_condition}
    \sqrt{\rho}\mu''(\rho)\nabla\rho=\frac{1}{2}\left(\sqrt{k(\rho)}+\frac{\rho k'(\rho)}{\sqrt{k(\rho)}}\right)\nabla\rho \lesssim \nabla \beta(\rho)\in L^2(\Omega).
\end{equation}
As a consequence of this general bound, we also obtain,
\begin{equation}
    m\cdot\nabla\mu'(\rho)=\nabla\beta(\rho)\cdot\big(\sqrt{\rho}\nabla\mu'(\rho)\big)\in L^1(\Omega).
\end{equation}
This auxiliary formulation allows us to rewrite the dispersive Korteweg term. Indeed, by using the definitions above, we have
\begin{equation}\label{riscrittura K}
    \Delta K(\rho)-\frac12K''(\rho)|\nabla\rho|^2=\diver (\mu'(\rho)m)-m\cdot\nabla\mu'(\rho)=\mu'(\rho)\diver m.
\end{equation}

By writing the Euler-Korteweg system in terms of these new variables, we can derive the evolution equations for our auxiliary fields.

\begin{lem}
    Let $(\rho,J)$ be a finite energy weak solution of the system \eqref{EK_w} in the sense of Definition \ref{def: WS-EK}. Then, the auxiliary momentum $m$ satisfies the following equation in the weak sense,
    \begin{equation}\label{eq: m}
        \partial_t m+\nabla\big(\diver\left(\mu'(\rho )J\right)-\Lambda\cdot (\sqrt{\rho}\nabla\mu'(\rho))\big)=0.
    \end{equation}
    Namely, for any test function $\varphi\in C([0,T]; C^2_c(\overline{\Omega})^d)$ such that $\varphi\cdot n|_{\d\Omega}=0$, we have,
    \begin{equation}\label{eq: weak v}
        \int_{\Omega} m\cdot\varphi \,dx\bigg|_{t=0}^T =\int_0^T\int_\Omega m\cdot\partial_t\varphi-\mu'(\rho) J\cdot \nabla\diver \varphi-\Lambda\cdot(\sqrt{\rho}\nabla\mu'(\rho))\diver \varphi \, dxdt.
    \end{equation}
    In addition, if $(\rho,J)$ is a strong solution of \eqref{EK_w} with strictly positive density $\rho>0$, the velocity fields $u$ and $v$ satisfy the following system,
    \begin{equation}\label{eq u e v}
    \left\{\begin{aligned}
        &\d_t u+ (u\cdot\nabla u)+\nabla\left(f'(\rho)-\varepsilon^2\mu'(\rho)\diver v-\frac{\varepsilon^2}{2}|v|^2\right)=0,\\
        &\d_t v+\nabla(u\cdot v+\mu'(\rho)\diver u)=0.
    \end{aligned}
    \right.
    \end{equation}
\end{lem}

\begin{rmk}We emphasize that the augmented system \eqref{eq u e v} is formally valid even in the vanishing capillarity limit $\varepsilon=0$. In this case, which corresponds to the Euler system for the limiting variables $(\rho^E, u^E)$, the auxiliary velocity $v^E = v(\rho^E)$ is driven by the velocity field $u^E$. This occurs because the dispersive terms $\varepsilon^2\nabla\left(\mu'(\rho^E)\diver v^E+\frac{1}{2}|v^E|^2\right)$ vanish.
\end{rmk}

\begin{proof}
    By multiplying the continuity equation by $\mu'(\rho)$, we obtain
    \begin{align*}
        0&=\d_t\mu(\rho)+\mu'(\rho)\diver J=\partial_t\mu(\rho)+\diver(\mu'(\rho)J)-J\cdot\nabla\mu'(\rho)\\
        &=\d_t\mu(\rho)+\diver(\mu'(\rho)J)-\Lambda\cdot \sqrt{\rho }\mu''(\rho)\nabla \rho.
    \end{align*}
Notice that all terms in the identity above are well-defined in the distributional sense, due to the integrability condition $\sqrt{\rho}\mu''(\rho)\nabla\rho \lesssim \nabla\beta(\rho) \in L^2(\Omega)$ established in \eqref{eq:integrab_condition}. Equation \eqref{eq: m} is then obtained by taking the gradient of the identity above.
    Similarly, multiplying the continuity equation by $\theta'(\rho)$ yields
    \begin{equation*}
        \d_t\theta+u\cdot\nabla \theta(\rho)+\rho\theta'(\rho)\diver u=0.
    \end{equation*}
    Taking the gradient of this identity provides the second equation in \eqref{eq u e v}. Finally, we rewrite the momentum equation as
    \begin{equation*}
        \rho\left(\d_t u+u\cdot\nabla u+\nabla f'(\rho)\right)=\varepsilon^2\rho\nabla\left(\diver\left(k(\rho)\nabla\rho\right)-\frac{1}{2}\rho k'(\rho)|\nabla\rho|^2\right).
    \end{equation*}
    Dividing by $\rho$ and noting that, by definition \eqref{def:v}, we have
    \begin{align*}
        \diver (k(\rho)\nabla\rho)&=\diver(\sqrt{\rho k(\rho)} v)=\sqrt{\rho k(\rho)}\diver v+v\cdot\nabla (\sqrt{\rho k(\rho)})\\
        &=\sqrt{\rho k(\rho)}\diver v+\frac{1}{2}|v|^2+\frac{1}{2}k'(\rho)|\nabla \rho|^2,
    \end{align*}
    we recover the first equation in \eqref{eq u e v}.    
\end{proof}

In this context, we introduce the \emph{higher-order relative energy} $\mathcal{E}_h$. This functional adds the relative kinetic energy of our auxiliary velocity field to the standard relative energy. Given a finite energy weak solution $(\rho, J)$ and a triplet of sufficiently smooth test functions $(r, U, V)$, we define,
\begin{align}
        \mathcal{E}_h(\rho, \Lambda | r, U, V)&=\frac{1}{2}\int_\Omega\left(|\Lambda-\sqrt{\rho}U|^2+\varepsilon^2|\nabla \beta(\rho)-\sqrt{\rho}V|^2\right)+\int_{\Omega}f(\rho|r)\label{eq:aug_rel}\\
        &=E_{EK}-\int_\Omega J\cdot U+\rho\left(-\frac{1}{2}|U|^2-\frac{\varepsilon^2}{2}|V|^2+\varepsilon^2v\cdot V+f'(r)\right)+\int_\Omega p(r),\notag
\end{align}
where the last identity follows analogously to \eqref{eq:rel_sp}.

\begin{rmk}
By comparing the definition of the standard relative energy \eqref{def:rel} and the higher-order one \eqref{eq:aug_rel}, we see that they are linked by the following relation,
\begin{equation}\label{eq:entr_rel}
\mathcal{E}_h(\tau)=\mathcal{E}(\tau)+\int_\Omega \left( \frac{\varepsilon^2}{2}\rho |V|^2-\varepsilon^2m\cdot V \right) dx.
\end{equation}
\end{rmk}

We are now ready to state our second main result, which improves Theorem \ref{main} by proving the convergence of the gradient $\varepsilon\nabla\beta(\rho^\varepsilon)$ in $L^2$. 

To do this via a Gr\"onwall estimate on $\mathcal{E}_h$, we want to use the limiting auxiliary velocity $v(\rho^E)$ as our test function $V$ in \eqref{eq:aug_rel}. However, in general the density $\rho^E$ does not satisfy the homogeneous Neumann boundary conditions required by the original system, meaning that $v(\rho^E)\cdot n|_{\partial \Omega} \neq 0$. 

To resolve this mismatch, we must introduce a \emph{boundary layer correction} $v_{bl}$. By evaluating the energy against a corrected test function $v^E_{bl} = v(\rho^E) - v_{bl}$, we force the right boundary condition. This approach is inspired by Kato's seminal paper on the vanishing viscosity limit for the Navier-Stokes equations \cite{Kato}. However, there is a crucial difference: in the viscous case, friction on the boundary creates a boundary layer, and closing the estimates requires assuming a priori that the anomalous energy dissipation at the boundary vanishes. In our setting, the zero-capillarity limit generates a \emph{weaker} boundary layer. In fact, our relative energy approach absorbs the boundary layer errors, and we do not need to impose further assumptions on the capillary energy near the boundary. See also \cite{MR3537458,MR3171346} for similar constraints in the compressible case.

\begin{thm}\label{main high}
Let $(\rho^\eps, J^\eps)$ be a finite energy weak solution to \eqref{EK}, \eqref{ID}, \eqref{BC} in the sense of Definition \ref{def: WS-EK}. Let $(\rho^E, u^E)$ be the strong solution to \eqref{Euler} given by Theorem \ref{theo sol euler} corresponding to initial data with regularity $k \geq 3$, so that $(\rho^E, u^E)\in C([0, T^*]; C^3(\overline{\Omega}))$.
Let us further assume that the initial data satisfy
\begin{equation}\label{eq:high_ini_data}
\mathcal{E}_h(\rho_0^\varepsilon,\Lambda_0^\varepsilon | \rho^E _0, u^E _0, v^E_{0})\to 0, \qquad \text{as } \eps \to 0.
\end{equation}
Then, as $\eps\to0$ we have
\begin{align}
\norm{\rho^\varepsilon- \rho^E }_{L^\infty([0,T^*];L^1\cap L^\gamma(\Omega))}&+\norm{\Lambda^\varepsilon-\sqrt{\rho^\varepsilon} u^E }_{L^\infty([0,T^*];L^2(\Omega))}\notag\\
&+\eps\|\nabla\beta(\rho^\eps)-\sqrt{\rho^\varepsilon} v^E\|_{L^\infty([0, T^*];L^2(\Omega))}\to 0.
\end{align}
\end{thm}
%\begin{rmk}
%Notice that in the hypothesis of Theorem \ref{main high}, the initial relative energy is evaluated against the limiting velocity $v^E_0$, rather than the boundary-layer corrected profile $v^E_{bl, 0}$. This choice is rigorous. In fact, the boundary layer correction $v_{bl}$ is a technical tool introduced to deal the boundary mismatch during the time evolution. As shown in Section \ref{sec main high}, the difference between the relative energy evaluated with $v^E$ and $v^E_{bl}$ is controlled by the norm of the correction itself, leaving an error of order $O(\varepsilon^{2-2s})$. Consequently, if the initial relative energy vanishes using $v^E_0$, it vanishes for $v^E_{bl, 0}$ as well.
%\end{rmk}
The organization of the paper is as follows. In Section \ref{Sec remark} we recall some preliminary 
results. In Section \ref{Sec energy ineq}, we derive an energy inequality which provides a measure of distance between the weak solution of Euler-Korteweg equations in the sense of Definition \ref{def: WS-EK} and the strong solution of the Euler system in the sense of Theorem \ref{theo sol euler} in terms of distance of initial data plus a remainder term. Moreover, we show that this rest is controlled by the functional \eqref{def:rel}. Section \ref{Sec proof} is devoted to the conclusion of the proof of Theorem \ref{main}. Along the same flow, we derive preliminary estimates for the higher order relative entropy functional in Section \ref{sec high order}. Since $\rho^E$ does not satisfy the boundary condition \eqref{BC}, we introduce a boundary layer correction in Section \ref{sec kato correction} and we derive an energy estimate similarly to Section \ref{Sec energy ineq}. We make once again estimates on the remainder term in Section \ref{sec estimate R high}. Finally, we prove Theorem \ref{main high} in Section \ref{sec main high}.

\section{Proof of the general relative energy result}
\subsection{Preliminary results} \label{Sec remark}
Before we address the core of the paper, let us fix some notations and recall some well-known results. Throughout the paper, we omit the superscript $\varepsilon$ whenever it does not cause any confusion. 

We recall the definition of relative internal energy,
\begin{align}
    f(\rho| r )&=f(\rho)-f'( r )\cdot(\rho- r )-f( r )\notag\\
    &=\frac{1}{\gamma-1}\rho^\gamma-\frac{\gamma}{\gamma-1}r^{\gamma-1}\rho+r^{\gamma}.
\end{align}
It is straightforward to check that $\rho\mapsto f(\rho)$ is strictly convex, non-negative, and such that $f(\rho|r)=0$ only when $\rho=r$.

In the next lemma, we collect some Gagliardo-Nirenberg-type estimates for $\rho$ that will be used later in this section. Although elementary, we report their proof for the sake of completeness.
\begin{lem}\label{lemma: GN inequality}
    Let $\Omega \subset \mathbb{R}^d$ be a domain, and let $\rho: \Omega \to [0, \infty)$ be a sufficiently regular function such that $\rho \in L^1(\Omega)$ and $\nabla\beta(\rho) \in L^2(\Omega)$. 
    Then, the following inequalities hold:
    \begin{equation}
        \norm{\rho^{\frac{2+\alpha}{2}}}_{L^2}\lesssim \|\rho\|_{L^1}^{a}\|\nabla\beta(\rho)\|_{L^2}^{b},
    \end{equation}
    where
    \begin{itemize}
        \item If $d=1$, then $a=\frac{2+\alpha}{3+\alpha}$ and $b=\frac{1+\alpha}{3+\alpha}$.
        \item If $d=2$, then $a=\frac{1}{2}$ and $b=\frac{1+\alpha}{2+\alpha}$.
        \item If $d\geq3$, then $a=\frac{2+\alpha}{d(1+\alpha)+2}$ and $b=\frac{d(1+\alpha)}{d(1+\alpha)+2}$.
    \end{itemize}
  In particular, we have
    \begin{equation}\label{eq:inparticular}
        \eps^{b}\norm{\rho^{\frac{2+\alpha}{2}}}_{L^2}\lesssim \mathcal{E}^{\frac{b}{2}},
    \end{equation}
    where $\mathcal{E}$ is the relative energy defined in \eqref{def:rel}.
\end{lem}

\begin{proof}
    \noindent \textbf{Case $d=1$:} 
    By exploiting the inequality
    \begin{equation*}
    \|\rho\|_{L^\infty}^{\frac{\alpha}{2}+\frac32}\leq C\|\rho\|_{L^1}^{\frac12} \|\nabla\beta(\rho)\|_{L^2},
    \end{equation*}
    we infer that
    \begin{equation*}
    \|\rho\|_{L^\infty}\leq C\|\rho\|_{L^1}^{\frac{1}{\alpha+3}} \|\nabla\beta(\rho)\|^\frac{2}{\alpha+3}_{L^2}.
    \end{equation*}
    Consequently, 
    \begin{equation*}
    \left(\int\rho^{2+\alpha}\,dx\right)^{\frac{1}{2}}
    \leq\|\rho\|_{L^\infty}^{\frac{1+\alpha}{2}} \norm{\rho}_{L^1}^{\frac{1}{2}}\leq C\|\rho\|_{L^1}^{\frac{\alpha+2}{\alpha+3}} \|\nabla\beta(\rho)\|^\frac{\alpha+1}{\alpha+3}_{L^2}.
    \end{equation*}

    \noindent \textbf{Case $d=2$:}
    Let $p\geq2$ and let us define $q=\frac{2+\alpha}{2}p>1$ (recall that we are assuming $\alpha>-1$). Then, by interpolation, we have
    \begin{equation*}
    \|\rho\|_{L^{2+\alpha}}\leq\|\rho\|_{L^1}^{1-\theta}\|\rho\|_{L^q}^{\theta},
    \end{equation*}
    where $\theta=\frac{(1+\alpha)p}{2(q-1)}$. Notice that $\alpha\geq-1$ and $p\geq2$ imply that $\theta\in[0, 1]$. Moreover, by the Gagliardo-Nirenberg inequality we also have that
    \begin{equation*}
    \|\rho^{1+\frac{\alpha}{2}}\|_{L^p}\lesssim
    \|\rho^{1+\frac{\alpha}{2}}\|_{L^2}^{\frac 2p}\|\nabla\beta(\rho)\|_{L^2}^{\frac{p-2}{p}},
    \end{equation*}
    that is,
    \begin{equation*}
    \|\rho\|_{L^q}^{1+\frac{\alpha}{2}}\lesssim
    \|\rho\|_{L^{2+\alpha}}^{\frac{2+\alpha}{p}}\|\nabla\beta(\rho)\|_{L^2}^{\frac{p-2}{p}}.
    \end{equation*}
    Consequently, 
    \begin{equation*}
    \|\rho\|_{L^{2+\alpha}}^{\frac{2+\alpha}{2\theta}}
    \lesssim\| \rho\|_{L^1}^{\frac{(1-\theta)(2+\alpha)}{2\theta}}
    \|\rho\|_{L^{2+\alpha}}^{\frac{2+\alpha}{p}}
    \|\nabla\beta(\rho)\|_{L^2}^{\frac{p-2}{p}},
    \end{equation*}
    which in turn implies that
    \begin{equation*}
    \|\rho\|_{L^{2+\alpha}}^{\frac{2+\alpha}{2\theta}-\frac{2+\alpha}{p}}\lesssim
    \| \rho\|_{L^1}^{\frac{(1-\theta)(2+\alpha)}{2\theta}}\| \nabla\beta(\rho)\|_{L^2}^{\frac{p-2}{p}}.
    \end{equation*}
    Since
    \begin{equation*}
    \frac{2+\alpha}{2\theta}-\frac{2+\alpha}{p}=\frac{(p-2)(2+\alpha)^2}{2p(1+\alpha)},
    \end{equation*}
    we conclude
    \begin{align*}
    \|\rho\|_{L^{2+\alpha}}^{\frac{2+\alpha}{2}}&\lesssim
    \|\rho\|_{L^1}^{\frac{(1-\theta)}{2\theta}\frac{(1+\alpha)p}{p-2}}\|\nabla\beta(\rho)\|_{L^2}^{\frac{1+\alpha}{2+\alpha}}\\
    &=
    \|\rho\|_{L^1}^{\frac{1}{2}}\|\nabla\beta(\rho)\|_{L^2}^{\frac{1+\alpha}{2+\alpha}}.
    \end{align*}

    \noindent \textbf{Case $d \geq 3$:}
    Using Sobolev embedding:
    \begin{equation*}
        \norm{\rho}_{L^{\frac{d(2+\alpha)}{d-2}}}^{1+\frac{\alpha}{2}}=\norm{\rho}^{1+\frac{\alpha}{2}}_{L^{2^*\left(1+\frac{\alpha}{2}\right)}}\lesssim\norm{\nabla\beta(\rho)}_{L^2}.
    \end{equation*}
    Interpolation: let $\vartheta=\frac{d(1+\alpha)}{d(1+\alpha)+2}$,
    \begin{equation*}
        \norm{\rho}_{L^{2+\alpha}}\leq\norm{\rho}^{1-\vartheta}_{L^1}\norm{\rho}^\vartheta_{L^{\frac{d(2+\alpha)}{d-2}}}.
    \end{equation*}
    Then,
    \begin{align*}
    \norm{\rho}_{L^{2+\alpha}}^{\frac{2+\alpha}{2}}&\leq\norm{\rho}_{L^1}^{\frac{(2+\alpha)(1-\vartheta)}{2}}\norm{\rho}_{L^{\frac{d(2+\alpha)}{d-2}}}^{\vartheta\frac{2+\alpha}{2}}\\
    &\leq\norm{\rho}_{L^1}^{\frac{(2+\alpha)(1-\vartheta)}{2}}\norm{\nabla \beta(\rho)}_{L^2}^{\vartheta}\\
    &=\norm{\rho}_{L^1}^{\frac{2+\alpha}{d(1+\alpha)+2}}\norm{\nabla \beta(\rho)}_{L^2}^{\frac{d(1+\alpha)}{d(1+\alpha)+2}}.
    \end{align*}
Finally, to prove \eqref{eq:inparticular}, we observe that the relative energy $\mathcal{E}$ bounds the capillary energy, $\eps\|\nabla\beta(\rho)\|_{L^2} \lesssim \mathcal{E}^{\frac{1}{2}}$. Using this, we obtain
    \begin{equation*}
        \norm{\rho^{\frac{2+\alpha}{2}}}_{L^2} \lesssim \|\rho\|_{L^1}^{a} \|\nabla\beta(\rho)\|_{L^2}^{b} \lesssim \left(\eps^{-1} \mathcal{E}^{\frac{1}{2}}\right)^{b} = \eps^{-b} \mathcal{E}^{\frac{b}{2}}.
    \end{equation*}
    This concludes the proof.
\end{proof}

\subsection{Relative energy inequality}
\label{Sec energy ineq}
We now derive an energy inequality for the relative energy $\mathcal{E}$. This inequality is obtained in a canonical way by exploiting the weak formulations \eqref{weak1}, \eqref{weak2}, and the energy inequality \eqref{energy inequality} (see, for instance, \cite{MR2992037, MR3647257} for analogous computations). 

We recall that $(\rho, J)$ denotes a dissipative weak solution to \eqref{EK_w}-\eqref{BC_w}, defined through $\Lambda\in L^\infty_t L^2_x$ by $J=\sqrt{\rho}\Lambda$. Moreover, $(\rho^E, u^E)$ denotes the strong solution to the Euler system \eqref{Euler}-\eqref{BC_E} given by Theorem \ref{theo sol euler}.

\begin{lem}\label{lem:rel1}
Let $\mathcal E$ be defined as in \eqref{def:rel}, where $(\rho,J)$ is a dissipative weak solution to \eqref{EK_w}-\eqref{BC_w}, and $( \rho^E , u^E )\in C([0,T];C^1(\overline{\Omega})\cross C^2(\overline{\Omega})^d)$ is a strong solution to \eqref{Euler}-\eqref{BC_E}. Then, for a.e. $\tau\in[0, T]$, we have the following relative energy inequality,
    \begin{equation}\label{relative entropy}
        \mathcal{E}(\tau)\leq  \mathcal{E}(0)+\int_0^\tau \mathcal{R}_{rel}(\tau) dt,
    \end{equation}
where the remainder term $\mathcal{R}_{rel}(\tau) = \mathcal{R}_{rel}(\rho, \Lambda | \rho^E, u^E)(\tau)$ is defined as,
\begin{equation}\label{eq:R_rel_def}
\begin{aligned}
    \mathcal{R}_{rel}(\tau)=&-
    \int_\Omega (\Lambda-\sqrt{\rho} u^E ) \otimes(\Lambda-\sqrt{\rho} u^E ) :\nabla u^E \, dx \\
    &-\int_\Omega\diver u^E \left(p(\rho)-p( \rho^E )-p'( \rho^E )(\rho- \rho^E )\right) \, dx \\
    &-\int_\Omega\varepsilon^2\nabla\beta(\rho)\otimes\nabla\beta(\rho):\nabla u^E \, dx \\
    &-\int_\Omega\frac{\varepsilon^2}{2}K''(\rho)|\nabla\rho|^2\diver u^E \, dx \\
    &-\int_\Omega\varepsilon^2\nabla K(\rho)\cdot \nabla(\diver u^E ) \, dx.
\end{aligned}
\end{equation}
\end{lem}

\begin{proof}
Let us recall identity \eqref{eq:rel_sp} for the relative energy functional,
\begin{equation*}
\mathcal E(\tau) = E_{EK}(\tau) -\int_{\Omega} \left[J\cdot u^E+\rho\left(-\frac12|u^E|^2+f'(\rho^E)\right) \right]dx +\int_{\Omega}p(\rho^E)\,dx.
\end{equation*}

This structure suggests exploiting the weak formulation of the continuity and momentum equations \eqref{weak1}-\eqref{weak2}, evaluating them against the test functions $-\frac12|u^E|^2+f'(\rho^E)$ and $u^E$, respectively. Notice that, since $u^E$ solves the IBVP associated with the Euler system, it satisfies $u^E\cdot n|_{\partial\Omega}=0$, and hence it is an admissible test function for \eqref{weak2}.

For the continuity equation, identity \eqref{weak1} with $\psi=-\frac12|u^E|^2+f'(\rho^E)$ yields
\begin{equation*}\begin{aligned}
0=&\int_{\Omega}\rho\left(-\frac12|u^E|^2+f'(\rho^E)\right)\,dx\bigg|_{t=0}^\tau\\
&-\int_0^\tau\int_{\Omega} \rho\partial_t\left(-\frac12|u^E|^2+f'(\rho^E)\right) +J\cdot\nabla\left(-\frac12|u^E|^2+f'(\rho^E)\right)\,dxdt\\
=&\int_{\Omega}\rho\left(-\frac12|u^E|^2+f'(\rho^E)\right)\,dx\bigg|_{t=0}^\tau\\
&+\int_0^\tau\int_{\Omega}\left( \rho u^E\cdot\partial_tu^E + J^k u^{E, j}\partial_k u^{E, j} \right)\,dxdt -\int_0^\tau\int_{\Omega}\left( \rho\partial_tf'(\rho^E)+J\cdot\nabla f'(\rho^E) \right)\,dxdt.
\end{aligned}\end{equation*}

For the momentum equation, identity \eqref{weak2} evaluated against $\phi=u^E$ yields:
\begin{equation*}\begin{aligned}
0=&\int_{\Omega}J\cdot u^E\,dx\bigg|_{t=0}^\tau
-\int_0^\tau\int_{\Omega} \left(J\cdot\partial_tu^E +\big(\Lambda\otimes\Lambda+\varepsilon^2\nabla\beta(\rho)\otimes\nabla\beta(\rho)\big):\nabla u^E \right)\,dxdt\\
&-\int_0^\tau\int_{\Omega}\left( \big(p(\rho)+\varepsilon^2K''(\rho)|\nabla\rho|^2\big)\diver u^E +\varepsilon^2\nabla K(\rho)\cdot\nabla\diver u^E \right)\,dxdt.
\end{aligned}\end{equation*}
Summing up the two integral identities, we obtain
\begin{equation}\label{eq:weak_sum}\begin{aligned}
0=& \int_{\Omega} \left[ J\cdot u^E+\rho\left(-\frac12|u^E|^2+f'(\rho^E)\right) \right]\,dx\bigg|_{t=0}^\tau\\
&-\int_0^\tau\int_{\Omega} (J-\rho u^E)\cdot\partial_tu^E + \big(\Lambda^j\Lambda^k-J^k u^{E, j}\big)\partial_k u^{E, j}\,dxdt\\
&-\int_0^\tau\int_{\Omega} \left( p(\rho)\diver u^E+\rho\partial_tf'(\rho^E) +J\cdot\nabla f'(\rho^E) \right)\,dxdt\\
&-\varepsilon^2\int_0^\tau\int_{\Omega} \left( \nabla\beta(\rho)\otimes\nabla\beta(\rho):\nabla u^E +\frac12K''(\rho)|\nabla\rho|^2\diver u^E +\nabla K(\rho)\cdot\nabla\diver u^E \right)\,dxdt\\
=& \int_{\Omega} \left[ J\cdot u^E+\rho\left(-\frac12|u^E|^2+f'(\rho^E)\right) \right]\,dx\bigg|_{t=0}^\tau\\
&-\int_0^\tau\int_{\Omega}(J-\rho u^E)\cdot\big[-u^E\cdot\nabla u^E-\nabla f'(\rho^E)\big]+ \big(\Lambda^j\Lambda^k-J^k u^{E, j}\big)\partial_k u^{E, j}\,dxdt\\
&-\int_0^\tau\int_{\Omega}\left( p(\rho)\diver u^E+\rho\partial_tf'(\rho^E) +J\cdot\nabla f'(\rho^E) \right)\,dxdt\\
&-\varepsilon^2\int_0^\tau\int_{\Omega} \left( \nabla\beta(\rho)\otimes\nabla\beta(\rho):\nabla u^E +\frac12K''(\rho)|\nabla\rho|^2\diver u^E +\nabla K(\rho)\cdot\nabla\diver u^E \right)\,dxdt,
\end{aligned}\end{equation}
where in the last identity we used that $u^E$ solves \eqref{eq:Euler_v}.
By exploiting identities \eqref{eq:rel_sp}, \eqref{eq:weak_sum}, and recalling that the total energy $E_{EK}(\tau)$ is non-increasing for dissipative weak solutions, we deduce

\begin{equation}\label{eq:rel_en_c}\begin{aligned}
\mathcal E(\tau)=&E_{EK}(\tau)-\int_{\Omega}\left[J\cdot u^E+\rho\left(-\frac12|u^E|^2+f'(\rho^E)\right)+p(\rho^E)\right]dx\Bigg|_{t=\tau}\\
\leq &E_{EK}(0)-\int_{\Omega}\left[J\cdot u^E+\rho\left(-\frac12|u^E|^2+f'(\rho^E)\right)+p(\rho^E)\right]dx\Bigg|_{t=0}\\
&-\int_0^\tau\int_{\Omega}(J-\rho u^E)\cdot\left[-u^E\cdot\nabla u^E- \nabla f'(\rho^E)\right] +\big(\Lambda^j\Lambda^k- J^k u^{E, j}\big)\partial_k u^{E, j}\,dxdt\\
&-\int_0^\tau\int_{\Omega}\left[p(\rho)\diver u^E+\rho\partial_tf'(\rho^E) +J\cdot\nabla f'(\rho^E)-\partial_tp(\rho^E)\right]dxdt\\
&-\varepsilon^2\int_0^\tau\int_{\Omega}\left[\nabla\beta(\rho)\otimes\nabla\beta(\rho):\nabla u^E +\frac12K''(\rho)|\nabla\rho|^2\diver u^E+\nabla K(\rho)\cdot\nabla\diver u^E\right]dxdt\\
=& \mathcal E(0)
-\int_0^\tau\int_{\Omega}\left[ - \big(J-\rho u^E\big)^j u^{E, k}\partial_k u^{E, j} +\big(\Lambda^j\Lambda^k- J^k u^{E, j}\big)\partial_k u^{E, j}\right]dxdt\\
&-\int_0^\tau\int_{\Omega}\left[(p(\rho)-p(\rho^E))\diver u^E +p(\rho^E)\diver u^E+\rho u^E\cdot\nabla f'(\rho^E)+\rho\partial_tf'(\rho^E) -\partial_tp(\rho^E)\right]dxdt\\
&-\varepsilon^2\int_0^\tau\int_{\Omega}\left[\nabla\beta(\rho)\otimes\nabla\beta(\rho):\nabla u^E+\frac12K''(\rho)|\nabla\rho|^2\diver u^E+\nabla K(\rho)\cdot\nabla\diver u^E\right]dxdt.
\end{aligned}\end{equation}
Now, let us notice that the integral in the first line may be written
\begin{equation*}
 -\int_0^\tau\int_{\Omega}(\Lambda-\sqrt{\rho}u^E)\otimes(\Lambda-\sqrt{\rho}u^E):\nabla u^E\,dxdt.
\end{equation*}
Moreover, we write the third term in the second line of \eqref{eq:rel_en_c} as
\begin{equation*}
-\int_0^\tau\int_{\Omega}\rho u^E\cdot\nabla f'(\rho^E)\,dxdt
=-\int_0^\tau\int_{\Omega} \left[ (\rho-\rho^E)u^E\cdot\nabla f'(\rho^E)+u^E\cdot\nabla p(\rho^E) \right] \,dxdt,
\end{equation*}
where we used that $p'(\rho)=\rho f''(\rho)$. Since $u^E\cdot n|_{\partial\Omega}=0$, the integral above, combined with the term $-p(\rho^E)\diver u^E$ in \eqref{eq:rel_en_c}, simplifies to
\begin{equation*}
-\int_0^\tau\int_{\Omega}(\rho-\rho^E)u^E\cdot\nabla f'(\rho^E)\,dxdt.
\end{equation*}
Analogously, by noting that $\partial_t p(\rho^E)=\rho^E\partial_tf'(\rho^E)$, the remaining time derivatives in the second line of \eqref{eq:rel_en_c} become
\begin{equation*}
-\int_0^\tau\int_{\Omega}(\rho-\rho^E)\partial_tf'(\rho^E)\,dxdt.
\end{equation*}
By summing these contributions, the entire second line of \eqref{eq:rel_en_c} may be written as
\begin{equation}\label{eq:rel_en_d}\begin{aligned}
-\int_0^\tau\int_{\Omega}(p(\rho)-p(\rho^E))\diver u^E\,dxdt -\int_0^\tau\int_{\Omega}(\rho-\rho^E)[\partial_tf'(\rho^E)+u^E\cdot\nabla f'(\rho^E)]\,dxdt.
\end{aligned}\end{equation}
We now use the continuity equation for $\rho^E$ to infer that
\begin{equation*}
\d_tf'(\rho^E)+u^E\cdot\nabla f'(\rho^E)=-\rho^Ef''(\rho^E)\diver u^E.
\end{equation*}
By recalling again that $\rho^Ef''(\rho^E)=p'(\rho^E)$, this implies that \eqref{eq:rel_en_d} reconstructs the relative pressure
\begin{equation}\label{eq; pressure}
-\int_0^\tau\int_{\Omega}[p(\rho)-p(\rho^E)-(\rho-\rho^E)p'(\rho^E)]\diver u^E\,dxdt.
\end{equation}
Collecting all these simplifications back into \eqref{eq:rel_en_c}, we finally arrive at
\begin{equation*}\begin{aligned}
\mathcal E(\tau)\leq&\mathcal E(0)
-\int_0^\tau\int_{\Omega}(\Lambda-\sqrt{\rho}u^E)\otimes(\Lambda-\sqrt{\rho}u^E):\nabla u^E \,dxdt \\
&-\int_0^\tau\int_{\Omega} \big[p(\rho)-p(\rho^E)-(\rho-\rho^E)p'(\rho^E)\big]\diver u^E\,dxdt\\
&-\varepsilon^2\int_0^\tau\int_{\Omega}\left( \nabla\beta(\rho)\otimes\nabla\beta(\rho):\nabla u^E +\frac12K''(\rho)|\nabla\rho|^2\diver u^E+\nabla K(\rho)\cdot\nabla\diver u^E \right)\,dxdt,
\end{aligned}\end{equation*}
which proves the statement of the Lemma.
\end{proof}
In order to obtain a Gronwall-type estimate for the relative energy, we need to control the remainder term $\mathcal{R}_{rel}$ appearing in \eqref{relative entropy}. 

\begin{prop}\label{stima sul resto}
Under the same assumption of Lemma \ref{lem:rel1} and for a.e. $\tau \in[0,T]$, there holds
    \begin{equation}\label{eq:rem}
        |\mathcal{R}_{rel}(\tau)|\leq C\big(\mathcal{E}(\tau) +\varepsilon^2\big),
    \end{equation}
    where $C>0$ is a constant depending only on the smooth Euler solution and the initial energy.
\end{prop}

\begin{proof}
Let us recall the definition of $\mathcal{R}_{rel}$ given in \eqref{eq:R_rel_def}. Since $u^E$ is a strong solution on bounded time intervals, $u^E$ and its derivatives up to second order are uniformly bounded in $L^\infty(\Omega)$. 

The first four terms in $\mathcal{R}_{rel}$ are controlled by the relative energy $\mathcal{E}(\tau)$. Specifically,
\begin{align}
    &\int_\Omega |(\Lambda-\sqrt{\rho} u^E )\otimes(\Lambda-\sqrt{\rho} u^E ):\nabla u^E |\,dx \leq \norm{\nabla u^E }_{L^\infty}\int_{\Omega}|\Lambda-\sqrt{\rho} u^E |^2\,dx \leq C \mathcal{E}(\tau), \label{eq:bound_R1}\\
    &\int_{\Omega} |p(\rho)-p(\rho^E)-p'(\rho^E)(\rho-\rho^E)||\diver  u^E |\,dx \leq (\gamma-1)\norm{\diver u^E }_{L^\infty} \int_{\Omega}f(\rho| \rho^E )\,dx \leq C \mathcal{E}(\tau), \label{eq:bound_R2}\\
    &\int_\Omega\varepsilon^2|\nabla\beta(\rho)\otimes\nabla\beta(\rho):\nabla u^E |\,dx \leq \varepsilon^2 \norm{\nabla  u^E }_{L^\infty}\int_{\Omega} |\nabla\beta(\rho)|^2 \,dx \leq C\mathcal{E}(\tau). \label{eq:bound_R3}
\end{align}
Regarding the fourth term, by recalling the bound \eqref{eq:cap_bound} on the capillary coefficient, we have $|K''(\rho)||\nabla\rho|^2 \lesssim |\nabla\beta(\rho)|^2$. Thus,
\begin{equation}\label{eq:bound_R4}
    \int_\Omega \frac{\varepsilon^2}{2}|K''(\rho)||\nabla\rho|^2|\diver u^E| \,dx \lesssim \varepsilon^2 \norm{\diver u^E}_{L^\infty} \int_\Omega |\nabla\beta(\rho)|^2\,dx \leq C\mathcal{E}(\tau).
\end{equation}
The only term requiring further attention is the last one in \eqref{eq:R_rel_def}, involving the second derivative of the Euler velocity. By recalling identities \eqref{K}, \eqref{beta}, and \eqref{eq:cap_coeff}, we have $\nabla K(\rho) = \rho k(\rho)\nabla\rho = \rho^{1+\alpha/2}\nabla\beta(\rho)$. Consequently, using the Cauchy-Schwarz inequality, we deduce
\begin{align}
    \varepsilon^2\int_\Omega|\nabla K(\rho)\cdot \nabla(\diver u^E )| \,dx &\leq \varepsilon^2\norm{\nabla\diver u^E }_{L^\infty}\int_\Omega \rho^{1+\alpha/2}|\nabla\beta(\rho)| \,dx \notag\\
    &\leq C \varepsilon \norm{\rho^{1+\alpha/2}}_{L^{2}} \left(\varepsilon \norm{\nabla \beta(\rho)}_{L^2}\right)\notag\\
    &\leq C \varepsilon \norm{\rho^{1+\alpha/2}}_{L^{2}} \mathcal{E}(\tau)^{1/2}. \label{eq:bound_R5_CS}
\end{align}
Finally, we note that by the inequality established in Lemma \ref{lemma: GN inequality},  we have
\begin{equation}\label{eq:bound_R5_final}
    C \varepsilon \norm{\rho^{1+\alpha/2}}_{L^{2}} \mathcal{E}(\tau)^{1/2} \leq C \varepsilon^{1-b} \mathcal{E}(\tau)^{\frac{b+1}{2}}.
\end{equation}
An application of Young's inequality then yields the upper bound $C(\mathcal{E}(\tau) + \varepsilon^2)$. Collecting estimates \eqref{eq:bound_R1}-\eqref{eq:bound_R4} and this last bound concludes the proof.
\end{proof}

\subsection{Proof of Theorem \ref{main}}\label{Sec proof}

To conclude the proof, we first state and prove a general analytical lemma that guarantees strong convergence on unbounded domains.

\begin{lem}\label{lem:convergenza 1_gamma}
    Let $\Omega \subseteq \mathbb{R}^d$ and $\gamma>1$. For any $\varepsilon\in(0,1)$, let $\rho^\varepsilon: \Omega \to [0, \infty)$ be a sequence of measurable functions, and let $r: \Omega \to [0, \infty)$ be a smooth and bounded function. Assume that:
    \begin{enumerate}
        \item \emph{(Uniform tightness):} For any $\eta>0$, there exists $R=R(\eta)>0$ such that for all $\varepsilon \in (0,1)$,
        \begin{equation*}
            \int_{\Omega \setminus B_R(0)} \rho^\varepsilon(x)\,dx \leq \eta \quad \text{and} \quad \int_{\Omega \setminus B_R(0)} r(x)\,dx \leq \eta.
        \end{equation*}
        \item \emph{(Relative internal energy vanishing):} $\int_{\Omega}f(\rho^\varepsilon|r)\,dx\to 0$ as $\varepsilon\to 0$.
        \item \emph{(Uniform bound):} The sequence $\{\rho^\varepsilon\}$ is uniformly bounded in $L^\gamma(\Omega)$.
    \end{enumerate}
    Then, $\rho^\varepsilon \to r$ strongly in $L^1(\Omega) \cap L^\gamma(\Omega)$.
\end{lem}

\begin{proof}
    From assumption (2) and Chebyshev's inequality, we know that for every $\sigma_1>0$, 
    \begin{equation*}
        \big|\big\{x\in \Omega \mid f(\rho^\varepsilon |r) > \sigma_1 \big\}\big|\to 0 \quad \text{as } \varepsilon\to 0.
    \end{equation*}
    Let us fix $\sigma_2>0$. Since the relative internal energy $f(\cdot|r)$ is strictly convex and $r$ is globally bounded, there exists a uniform constant $\sigma_1>0$ (depending only on $\sigma_2$ and $\|r\|_{L^\infty}$) such that 
    \begin{equation*}
        \big\{x\in \Omega \mid |\rho^\varepsilon-r|>\sigma_2\big\} \subset \big\{x\in \Omega \mid f(\rho^\varepsilon |r)>\sigma_1\big\}.
    \end{equation*}
    Consequently,
    \begin{equation*}
        \big|\big\{x\in \Omega \mid |\rho^\varepsilon-r|>\sigma_2\big\}\big|\to 0 \quad \text{as } \varepsilon\to 0,
    \end{equation*}
    which implies that $\rho^\varepsilon$ converges to $r$ in measure. Because the sequence is uniformly tight (Assumption 1) and uniformly bounded in $L^\gamma(\Omega)$ (Assumption 3, which ensures uniform integrability), Vitali's Convergence Theorem guarantees that $\rho^\varepsilon \to r$ strongly in $L^1(\Omega)$. 
    
    Finally, to obtain the strong convergence in $L^\gamma(\Omega)$, we split the integral and exploit the bounds of the relative internal energy. Specifically, since $r$ is bounded, there exists a uniform constant $C>0$ such that $f(\rho|r) \ge C |\rho-r|^\gamma$ whenever $|\rho-r| > 1$. Therefore,
    \begin{equation*}
        \int_\Omega |\rho^\varepsilon - r|^\gamma \,dx = \int_{\{|\rho^\varepsilon - r| \le 1\}} |\rho^\varepsilon - r|^\gamma \,dx + \int_{\{|\rho^\varepsilon - r| > 1\}} |\rho^\varepsilon - r|^\gamma \,dx.
    \end{equation*}
    In the first integral on the right-hand side, since $\gamma > 1$, we have $|\rho^\varepsilon - r|^\gamma \le |\rho^\varepsilon - r|$. Thus, this term vanishes as $\varepsilon \to 0$ due to the strong $L^1$ convergence just established. The second integral is bounded by $C^{-1} \int_\Omega f(\rho^\varepsilon|r)\,dx$, which vanishes by Assumption 2. This concludes the proof.
\end{proof}

\begin{proof}[Conclusion of the Proof of Theorem \ref{main}]
By injecting the remainder estimate \eqref{eq:rem} provided by Proposition \ref{stima sul resto} into the relative energy inequality \eqref{relative entropy}, we obtain
\begin{equation*}
\mathcal E(\tau)\leq\mathcal E(0)+C\int_0^\tau \big( \mathcal E(t)+\varepsilon^2 \big)\,dt.
\end{equation*}
An application of Grönwall's Lemma yields
\begin{equation}\label{eq:Gronwall_final}
\sup_{\tau\in[0, T]}\mathcal E(\tau)\leq e^{CT}\left(\mathcal E(0)+C\varepsilon^2 T\right).
\end{equation}
By the assumptions on the initial data \eqref{condizione sui dati iniziali}, we know that $\mathcal E(0) \to 0$ as $\varepsilon \to 0$. Consequently, the right-hand side of \eqref{eq:Gronwall_final} vanishes, meaning
\begin{equation*}
\lim_{\varepsilon\to0}\sup_{\tau\in[0, T]}\mathcal E(\tau)=0.
\end{equation*}
Recalling the definition of the relative energy \eqref{def:rel}, this convergence implies,
\begin{align}
    &\lim_{\varepsilon\to 0} \|\Lambda-\sqrt{\rho}u^E\|_{L^\infty(0, T; L^2(\Omega))} = 0, \label{eq:conv_lambda}\\
    &\lim_{\varepsilon\to 0} \sup_{\tau\in[0, T]} \int_\Omega f(\rho|\rho^E)\,dx = 0. \label{eq:conv_f}
\end{align}
To show the strong convergence of the mass density \eqref{convergenza}, we apply Lemma \ref{lem:convergenza 1_gamma} with $r = \rho^E(t)$. The relative energy vanishing is given by \eqref{eq:conv_f}, and the uniform tightness and $L^\gamma$ bounds are guaranteed by the definition of finite energy weak solutions \eqref{def: WS-EK}. Thus, we deduce,
\begin{equation*}
    \rho^\varepsilon \to \rho^E \quad \text{strongly in } L^\infty(0,T; L^1(\Omega) \cap L^\gamma(\Omega)).
\end{equation*}
Finally, to prove the convergence of the momentum $J$ as stated in \eqref{convergenza J}, we decompose the difference $J - \rho^E u^E$ by adding and subtracting $\rho u^E$:
\begin{equation*}
    J - \rho^E u^E = (J - \rho u^E) + (\rho - \rho^E) u^E = \sqrt{\rho}(\Lambda - \sqrt{\rho}u^E) + (\rho - \rho^E)u^E.
\end{equation*}
We bound its $L^1$-norm using the triangle inequality and the Cauchy-Schwarz inequality, as follows
\begin{equation*}
    \| J - \rho^E u^E \|_{L^1(\Omega)} \leq \|\sqrt{\rho}\|_{L^2(\Omega)} \|\Lambda - \sqrt{\rho}u^E\|_{L^2(\Omega)} + \|\rho - \rho^E\|_{L^1(\Omega)} \|u^E\|_{L^\infty(\Omega)}.
\end{equation*}
Since the total mass $\|\rho\|_{L^1} = \|\sqrt{\rho}\|^2_{L^2}$ is uniformly bounded, $u^E$ is smooth, and both $\|\Lambda - \sqrt{\rho}u^E\|_{L^2}$ and $\|\rho - \rho^E\|_{L^1}$ vanish uniformly in time as $\varepsilon \to 0$, we conclude that,
\begin{equation*}
    J^\varepsilon \to \rho^E u^E \quad \text{strongly in } L^\infty(0,T; L^1(\Omega)).
\end{equation*}
\end{proof}

\section{The Augmented Relative Energy Inequality}\label{sec high order}
In this section, we consider the augmented relative energy $\mathcal E_h$ defined in \eqref{eq:aug_rel}. The primary motivation for introducing this higher-order functional is to control the density gradient $\nabla\rho$ and prove its strong convergence. 

For the sake of clarity, we first derive the augmented relative energy inequality evaluated against generic test functions $(r, U, V)$. Indeed, to perform the boundary layer analysis, our ultimate test functions are the Euler solutions modified by a boundary layer correction. These corrected profiles do not solve the limit system exactly, but generate a remainder. Establishing the identity for arbitrary test functions allows us to isolate these error terms before evaluating on the approximate solution.

\begin{lem}\label{lem:rel_hi}
 Let $(\rho,J)$ be a dissipative weak solution to \eqref{EK_w}-\eqref{BC_w} in the sense of Definition \ref{def: WS-EK}. Let the test functions $(r, U, V)$ belong to the spaces,
 \begin{equation*}
     r \in C^1([0,T]; C^1(\overline{\Omega})), \qquad U, V \in C^1([0,T]; C^2(\overline{\Omega})^d),
 \end{equation*}
 with $r$ uniformly bounded away from zero. Furthermore, assume that the vector fields satisfy the boundary conditions $U\cdot n|_{\partial\Omega}=V\cdot n|_{\partial\Omega}=0$. 
 Then, for a.e. $\tau \in [0,T]$, the following inequality holds:
\begin{align}\label{eq:aug_rel_b}
    \mathcal{E}_h(\tau) \leq \mathcal{E}_h(0) 
    &+ \int_0^\tau\int_\Omega (\rho U-J)\cdot\partial_t U + \big(J^k U^j-\Lambda^k\Lambda^j\big)\partial_k U^j \,dxdt \notag\\
    &- \int_0^\tau\int_\Omega p(\rho)\diver U + (\rho-r)\partial_t f'(r) + J\cdot\nabla f'(r) \,dxdt \notag\\
    &- \varepsilon^2 \int_0^\tau\int_\Omega \nabla\beta(\rho)\otimes\nabla\beta(\rho):\nabla U + \frac{1}{2}K''(\rho)|\nabla\rho|^2\diver U + \nabla K(\rho)\cdot\nabla(\diver U) \,dxdt \notag\\
    &+ \varepsilon^2 \int_0^\tau\int_\Omega (\rho V-m)\cdot\partial_t V + J^k V^j \partial_k V^j \,dxdt \notag\\
    &+ \varepsilon^2 \int_0^\tau\int_\Omega \mu'(\rho)J\cdot \nabla\diver V - \Lambda\cdot \big(\sqrt{\rho}\mu''(\rho)\nabla\rho\big)\diver V \,dxdt.
\end{align}
\end{lem}

\begin{proof}
The proof is straightforward and analogous to Lemma \ref{lem:rel1}. For the sake of clarity, we briefly sketch the main steps.

Proceeding as in the derivation of Lemma \ref{lem:rel1}, we evaluate the weak formulations \eqref{weak1} and \eqref{weak2} against the generic test functions $(r, U)$ instead of the exact Euler solution. This yields the following relative energy bound
\begin{equation}\label{eq:general_weak}
\begin{aligned}
\mathcal E(\tau)\leq&\mathcal E(0)
+\int_0^\tau\int_\Omega (\rho U - J)\cdot\d_t U + \big(J^k U^j - \Lambda^k\Lambda^j\big)\d_k U^j \,dxdt\\
&-\int_0^\tau\int_{\Omega} p(\rho)\diver U + (\rho-r)\d_t f'(r) + J\cdot\nabla f'(r)\,dxdt\\
&-\varepsilon^2\int_0^\tau\int_{\Omega} \nabla\beta(\rho)\otimes\nabla\beta(\rho):\nabla U + \frac12 K''(\rho)|\nabla\rho|^2\diver U + \nabla K(\rho)\cdot\nabla\diver U\,dxdt,
\end{aligned}
\end{equation}
where $\mathcal E$ is the standard relative energy introduced in \eqref{def:rel}.

Next, we recall the definition of the augmented relative energy \eqref{eq:aug_rel},
\begin{equation*}
\mathcal E_h(\tau)=\mathcal E(\tau)+\varepsilon^2\int_{\Omega}\left( \frac{1}{2}\rho|V|^2 - m\cdot V \right)dx.
\end{equation*}
To evaluate the time evolution of the augmented terms, we use the weak formulation of the continuity equation \eqref{weak1} tested against $-\frac{\varepsilon^2}{2}|V|^2$, and the weak formulation of the augmented momentum equation \eqref{eq: weak v} tested against $\varepsilon^2 V$. Notice that the spatial regularity $V \in C^2(\overline{\Omega})^d$ guarantees that $\nabla \diver V$ is well-defined and bounded, making $V$ a rigorously admissible test function. Summing these two contributions yields:
\begin{equation}\label{eq:general_weak_aug}
\begin{aligned}
\varepsilon^2\int_{\Omega}\left(\frac{1}{2}\rho|V|^2 - m\cdot V\right)dx \Bigg|_{t=0}^\tau =&
\varepsilon^2\int_0^\tau\int_{\Omega} (\rho V-m)\cdot\d_tV + J^k V^j \d_k V^j\,dxdt\\
&+\varepsilon^2\int_0^\tau\int_{\Omega} \mu'(\rho)J\cdot\nabla\diver V - \Lambda\cdot\big(\sqrt{\rho}\mu''(\rho)\nabla\rho\big)\diver V\,dxdt.
\end{aligned}
\end{equation}
By adding identity \eqref{eq:general_weak_aug} to inequality \eqref{eq:general_weak}, we recover the augmented inequality \eqref{eq:aug_rel_b}.
\end{proof}

\subsection{Boundary layer correction}\label{sec kato correction}

In order to show the convergence stated in Theorem \ref{main high}, we would like to use $(r, U, V)=(\rho^E, u^E, v(\rho^E))$ as test functions in Lemma \ref{lem:rel_hi}. However, in general, $\rho^E$ does not satisfy the homogeneous Neumann boundary condition \eqref{BC}, and hence $v(\rho^E)\cdot n|_{\d\Omega}\neq 0$. For this reason, we must introduce a boundary layer correction $v_{bl}$, so that the corrected velocity $v^E_{bl} := v(\rho^E)-v_{bl}$ satisfies $v^E_{bl}\cdot n|_{\d\Omega}=0$. 

To this end, let $d_\Omega(x) = \text{dist}(x, \partial\Omega)$ be the distance function to the boundary. We introduce a smooth cut-off function $\chi \in C^\infty([0, \infty))$ satisfying the following properties,
\begin{equation*}
    0 \le \chi \le 1, \quad \chi(0)=1, \quad \chi'(0)=0, \quad \text{and} \quad \chi(s)=0 \text{ for } s \ge 1.
\end{equation*}
Recalling the definition of $\theta$ given in \eqref{def:theta}, we then define the boundary layer corrector as
\begin{align}
    v_{bl}(\rho^E) &=\nabla\left[\chi\left(\frac{d_\Omega(x)}{c\delta}\right)\theta(\rho^E)\right] \label{v_bl}\\
    &=\chi\left(\frac{d_\Omega(x)}{c\delta}\right)v(\rho^E) + \theta(\rho^E)\chi'\left(\frac{d_\Omega(x)}{c\delta}\right)\frac{\nabla d_\Omega}{c\delta}, \notag
\end{align}
where $c>0$ is a constant and $\delta>0$ is the boundary layer thickness parameter.
Notice that, thanks to the condition $\chi'(0)=0$, on the boundary $\partial\Omega$ we have
\begin{equation}
    v_{bl} \cdot n = v(\rho^E)\cdot n.
\end{equation} 
Moreover, for $\rho^E$ as in Theorem \ref{theo sol euler}, since $\chi'$ is bounded and scaled by $\delta$, we have the following asymptotic bounds
\begin{align}
\label{asintotico}
    \|v_{bl}\|_{L^\infty} &= O\left(\delta^{-1}\right) \quad \text{in } C([0,T]; L^\infty(\Omega)), \notag\\
    \|\d_t v_{bl}\|_{L^\infty} &= O\left(\delta^{-1}\right) \quad \text{in } C([0,T]; L^\infty(\Omega)), \\
    \|D^\alpha_x v_{bl}\|_{L^\infty} &= O\left(\delta^{-|\alpha|-1}\right) \quad \text{in } C([0,T]; L^\infty(\Omega)), \notag
\end{align}
for any multi-index $\alpha\in\mathbb{N}^d$. 

For the sake of brevity, in what follows we denote the Euler quantum velocity as $v^E = v(\rho^E)$. We finally set our admissible test function to be
\begin{equation}\label{v boundary layer}
    v^E_{bl} = v^E - v_{bl}(\rho^E).
\end{equation}
By construction, this corrected profile rigorously satisfies the homogeneous Neumann boundary condition 
\begin{equation*}
    v^E_{bl} \cdot n|_{\partial\Omega} = 0.
\end{equation*}

\begin{lem}\label{lem:rel1 high}
    Let $(\rho,J)$ be a dissipative weak solution to \eqref{EK_w}-\eqref{BC_w} in the sense of Definition \ref{def: WS-EK}, let $( \rho^E , u^E ) \in C^1([0,T]; C^3(\overline{\Omega}) \times C^2(\overline{\Omega})^d)$ be the strong Euler solution bounded away from vacuum ($\rho^E > 0$), and let $v^E_{bl} \in C^1([0,T]; C^2(\overline{\Omega})^d)$ be defined as in \eqref{v boundary layer}. 
    
    Then, for a.e. $\tau\in (0,T)$, we have 
    \begin{align}\label{high relative entropy}
        \mathcal{E}_h(\tau) \leq \mathcal{E}_h(0) + \int_0^\tau \mathcal{R}_h(t)\,dt,
    \end{align}
where the total remainder is decomposed as
\begin{equation}\label{eq:three_rem}
    \mathcal{R}_h = \mathcal{R}_{rel} + \mathcal{R}_{bl} + \mathcal{R}_{in}.
\end{equation}
Here, $\mathcal{R}_{rel}$ is given by
\begin{align*}
    \mathcal{R}_{rel} =& -\int_\Omega \left[(\Lambda-\sqrt{\rho}u^E)\otimes(\Lambda-\sqrt{\rho}u^E) + \varepsilon^2(\nabla\beta(\rho)-\sqrt{\rho}v^E)\otimes(\nabla\beta(\rho)-\sqrt{\rho}v^E)\right] : \nabla u^E \\
    &+\varepsilon^2\int_\Omega (m-\rho v^E)\cdot\left[\nabla(\mu'(\rho^E)-\mu'(\rho))\diver u^E+ (\mu'(\rho^E)-\mu'(\rho))\nabla\diver u^E\right]\\
    &+\varepsilon^2\int_\Omega (J-\rho u^E)\cdot\left[\nabla(\mu'(\rho)-\mu'(\rho^E))\diver v^E+ (\mu'(\rho)-\mu'(\rho^E))\nabla\diver v^E\right]\\
    &-\int_\Omega \big[p(\rho)-p(\rho^E)-(\rho-\rho^E)p'(\rho^E)\big]\diver u^E,
\end{align*}
the boundary layer error $\mathcal{R}_{bl}$,
\begin{align*}
    \mathcal{R}_{bl} =& -\varepsilon^2\int_\Omega (\rho v^E_{bl}-m)\cdot \d_t v_{bl} + \varepsilon^2\int_\Omega J^j\left(v^{k}_{bl}\d_jv^{k}_{bl}-v^{k}_{bl}\d_jv^{E,k}-v^{E,k}\d_jv^{k}_{bl}\right)\\
    &+\varepsilon^2\int_\Omega\rho v_{bl}^j v^{E,k}\partial_j u^{E,k} + \varepsilon^2\int_\Omega \rho v_{bl}\cdot\nabla(\mu'(\rho^E)-\mu'(\rho))\diver u^E\\
    &+\varepsilon^2\int_\Omega \rho (\mu'(\rho^E)-\mu'(\rho) )v_{bl}\cdot\nabla\diver u^E -\varepsilon^2\int_\Omega \mu'(\rho)J\cdot \nabla\diver v_{bl}\\
    &-\varepsilon^2\int_\Omega \rho\mu'(\rho)\diver v_{bl}\diver u^E -\varepsilon^2\int_\Omega J\cdot \nabla \mu'(\rho)\diver v_{bl} \\
    &+ \varepsilon^2 \int_\Omega \Big( m^j u^{E,k}\d_k v^j_{bl} - m^j v^k_{bl}\d_k u^{E,j} - m^j u^{E,j}\d_k v^k_{bl} \Big) \, dx.
\end{align*}
and the remainder $\mathcal{R}_{in}$ is,
\begin{equation*}
    \mathcal{R}_{in} = -\varepsilon^2\int_\Omega (\rho u^E-J)\cdot\nabla\left(\mu'(\rho^E)\diver v^E+\frac{1}{2}|v^E|^2\right).
\end{equation*}
\end{lem}

\begin{rmk}
Let us clarify the roles of the three remainder terms appearing in \eqref{eq:three_rem}. $\mathcal{R}_{rel}$ is the higher-order counterpart of the relative energy \eqref{relative entropy}, containing squares for both the kinetic and the capillary relative velocities. The term $\mathcal{R}_{bl}$ accounts for the errors generated by the boundary layer corrector $v_{bl}$. Finally, the contribution $\mathcal{R}_{in}$ arises because the strong Euler solution $(\rho^E, u^E)$ does not exactly solve the full Korteweg system \eqref{EK}. %Let us also remark that all terms in $\mathcal{R}_{rel}$ are well-defined for dissipative weak solutions, in particular we recall \eqref{eq:integrab_condition}.
\end{rmk}
\begin{proof}
We apply the identity provided by Lemma \ref{lem:rel_hi}, choosing the test functions $(r, U, V)=(\rho^E, u^E, v^E_{bl})$. We exploit the  equations of the Euler profiles $u^E$ and $v^E$ derived in \eqref{eq u e v}. To isolate the limit remainder generated by the Euler system, we add and subtract the Korteweg tensor evaluated at $(\rho^E,u^E)$.
The negative part forms the remainder $\mathcal{R}_{in}$. Rearranging the remaining terms by lines, we obtain
\begin{equation}\label{eq:201}
\begin{aligned}
    \mathcal{E}_h(\tau) &- \mathcal{E}_h(0) \\&\leq \int_0^\tau \mathcal{R}_{in} dt + \int_0^\tau\int_\Omega (\rho u^E-J)\cdot \left[-u^E\cdot \nabla u^E-\nabla f'(\rho^E) + \varepsilon^2\nabla\Big(\mu'(\rho^E)\diver v^E+\frac{1}{2}|v^E|^2\Big)\right] \,dx \\
    &+\varepsilon^2\int_0^\tau\int_\Omega (\rho v^E_{bl}-m)\cdot\left[-\nabla(u^E\cdot v^E)-\nabla(\mu'(\rho^E)\diver u^E) -\partial_t v_{bl}\right]\,dx\\
    &+\int_0^\tau\int_\Omega (J^ju^{E,k}-\Lambda^j\Lambda^k)\d_ju^{E,k} - p(\rho)\diver u^E + (\rho-\rho^E)\partial_t f'(\rho^E) + J\cdot\nabla f'(\rho^E)\,dx\\
    &+\varepsilon^2\int_0^\tau\int_\Omega J^j v^{E,k}_{bl}\d_j v_{bl}^{E,k} + \mu'(\rho)J\cdot \nabla\diver v^E_{bl} - \Lambda\cdot \big(\sqrt{\rho}\mu''(\rho)\nabla\rho\big)\diver v^E_{bl}\,dx\\
    &-\varepsilon^2\int_0^\tau\int_\Omega \nabla\beta(\rho)\otimes\nabla\beta(\rho):\nabla u^E + m\cdot\nabla\mu'(\rho)\diver u^E + \mu'(\rho)m\cdot\nabla(\diver u^E)\,dx,
\end{aligned}
\end{equation}
where in the last line we used the identities 
\begin{equation*}
    \frac{1}{2}K''(\rho)|\nabla\rho|^2 = m\cdot\nabla\mu'(\rho) \quad \text{and} \quad \nabla K(\rho) = \mu'(\rho)m.
\end{equation*}
We now complete the perfect squares. For the kinetic relative velocity, we combine 
\begin{equation}\label{eq:proof_kin}
    \int_\Omega (J^ju^{E,k}-\Lambda^j\Lambda^k)\d_ju^{E,k} - (\rho u^E-J)\cdot (u^E\cdot \nabla) u^E \,dx = -\int_\Omega (\Lambda-\sqrt{\rho}u^E)\otimes(\Lambda-\sqrt{\rho}u^E) : \nabla u^E \,dx.
\end{equation}
Analogously, we reconstruct the capillary relative velocity square. Recalling the definition \eqref{v boundary layer} and expanding the gradients, we isolate the perfect square and its boundary layer remainders,
\begin{align}\label{eq:proof_cap}
    -\varepsilon^2\int_\Omega &(\rho v^E_{bl}-m)\cdot \nabla(u^E\cdot v^E) + \nabla\beta(\rho)\otimes\nabla\beta(\rho):\nabla u^E \,dx \notag\\
    =& -\varepsilon^2\int_\Omega (\nabla\beta(\rho)-\sqrt{\rho}v^E)\otimes(\nabla\beta(\rho)-\sqrt{\rho}v^E):\nabla u^E \,dx \notag\\
    &- \varepsilon^2\int_\Omega m^k v^{E,j}\d_j u^{E,k} \,dx - \varepsilon^2\int_\Omega (\rho v^E-m)^j u^{E,k}\d_j v^{E,k} \,dx \notag\\
    &+ \varepsilon^2\int_\Omega \rho v_{bl}^j (v^{E,k}\partial_j u^{E,k} + u^{E,k}\d_j v^{E,k}) \,dx.
\end{align}
By adding and subtracting $\rho v^{E,k} u^{E,j}\d_j v^{E,k}$, the cross-terms in the second line of \eqref{eq:proof_cap} become
\begin{align}\label{eq:proof_irrot}
    -\varepsilon^2\int_\Omega (\rho v^E-m)^j u^{E,k}\d_j v^{E,k} \,dx &= \varepsilon^2\int_\Omega m^j u^{E,k}\d_j v^{E,k} \,dx - \varepsilon^2\int_\Omega \rho v^{E,j} u^{E,k}\d_j v^{E,k} \,dx \notag\\
    &= \varepsilon^2\int_\Omega m^j u^{E,k}\d_j v^{E,k} \,dx -\varepsilon^2 \int_\Omega \rho v^{E,k} u^{E,j}\d_j v^{E,k} \,dx \notag\\
    &\quad +\varepsilon^2 \int_\Omega \rho (v^{E,k}u^{E,j} - v^{E,j}u^{E,k})\d_j v^{E,k} \,dx.
\end{align}
Since $v^E$ is irrotational, i.e. $v^E = \nabla\theta(\rho^E)$, we have $\partial_j v^{E,k} = \partial_k v^{E,j}$.
Thus, the last integral in \eqref{eq:proof_irrot} vanishes, resulting in 
\begin{align}\label{eq:proof_irrot2}
    -\varepsilon^2\int_\Omega (\rho v^E-m)^j u^{E,k}\d_j v^{E,k} \,dx 
    &= \varepsilon^2\int_\Omega m^j u^{E,k}\d_j v^{E,k} \,dx - \varepsilon^2\int_\Omega \rho v^{E,k} u^{E,j}\d_j v^{E,k} \,dx.
\end{align}
Notice that the added term in \eqref{eq:201} expands as follows
\begin{equation}
    \varepsilon^2 \int (\rho u^E - J) \cdot \nabla (\frac{1}{2}|v^E|^2)=\varepsilon^2 \int \rho u^{E,j} v^{E,k}\d_j v^{E,k} - \varepsilon^2 \int J^j v^{E,k}\d_j v^{E,k}.
\end{equation}
The first part is canceled by the second integral on the right hand side of \eqref{eq:proof_irrot2}. On the other hand, the second part is canceled by the first term of the following expansion from \eqref{eq:201}: 
\begin{equation*}
    \varepsilon^2\int_\Omega J^j v^{E,k}_{bl}\d_j v_{bl}^{E,k}\, dx=\varepsilon^2\int_\Omega J^j\left(v^{E,k}\d_j v^{E,k}+v^{k}_{bl}\d_jv^{k}_{bl}-v^{k}_{bl}\d_jv^{E,k}-v^{E,k}\d_jv^{k}_{bl}\right)\, dx.
\end{equation*}
Next, we handle the high-order Korteweg stress terms. We aim to pair the gradients of the divergence $\nabla \diver u^E$ and $\nabla \diver v^E$. Expanding $v^E_{bl}$, 
\begin{multline}\label{eq:proof_mob1}
    m\cdot\nabla \big(\mu'(\rho^E)-\mu'(\rho)\big)\diver u^E -\rho v^E_{bl}\cdot\nabla \mu'(\rho^E)\diver u^E +J\cdot\nabla\big(\mu'(\rho)-\mu'(\rho^E)\big)\diver v^E+\rho u^E\cdot \nabla\mu'(\rho^E)\diver v^E \\
    = (m-\rho v^E_{bl})\cdot\nabla\big(\mu'(\rho^E)-\mu'(\rho)\big)\diver u^E + (J-\rho u^E)\cdot\nabla\big(\mu'(\rho)-\mu'(\rho^E)\big)\diver v^E \\
    -\rho v^E_{bl}\cdot \nabla\mu'(\rho)\diver u^E + \rho u^E\cdot \nabla \mu'(\rho)\diver v^E.
\end{multline}
We integrate by parts the last two terms of \eqref{eq:proof_mob1}. Using the homogeneous boundary condition $v^E_{bl}\cdot n|_{\d \Omega}=0$, we obtain
\begin{align}\label{eq:proof_mob2}
    -\int_\Omega \rho v^E_{bl}\cdot\nabla\mu'(\rho)\diver u^E \,dx &= \int_\Omega\rho\mu'(\rho)\diver v^E_{bl}\diver u^E \,dx+ \mu'(\rho)v^E_{bl}\cdot\nabla\rho\diver u^E \,dx\notag\\&\quad+ \int_\Omega\rho\mu'(\rho)v^E_{bl}\cdot \nabla\diver u^E \,dx.
\end{align}
Analogously, for the velocity $u^E$,
\begin{align}\label{eq:proof_mob3}
    \int_\Omega \rho u^E\cdot\nabla\mu'(\rho)\diver v^E \,dx =& -\int_\Omega\rho\mu'(\rho)\diver v^E\diver u^E + \mu'(\rho)u^E\cdot\nabla\rho\diver v^E\,dx\notag
    \\&- \int_\Omega\rho\mu'(\rho)u^E\cdot \nabla\diver v^E \,dx.
\end{align}
Pairing \eqref{eq:proof_mob2} and \eqref{eq:proof_mob3} with $\nabla \diver u^E$ and $\nabla \diver v^E$ terms in \eqref{eq:201}, we get
\begin{align*}
    \nabla\diver u^E \cdot \Big(m\mu'(\rho^E)-m\mu'(\rho)-\rho \mu'(\rho^E)v^E_{bl}+\rho\mu'(\rho)v^E_{bl}\Big) &= \nabla\diver u^E \cdot (m-\rho v^E_{bl})\big(\mu'(\rho^E)-\mu'(\rho)\big),\\
    \nabla\diver v^E \cdot \Big(J\mu'(\rho)-J\mu'(\rho^E)+\rho \mu'(\rho^E)u^E-\rho\mu'(\rho)u^E\Big) &= \nabla\diver v^E \cdot (J-\rho u^E)\big(\mu'(\rho)-\mu'(\rho^E)\big).
\end{align*}
Finally, we isolate the remaining cross-terms involving the momentum $m$. Recalling that $\nabla \mu(\rho)=m$, from \eqref{eq:proof_cap}, \eqref{eq:proof_irrot2}, \eqref{eq:proof_mob2} and \eqref{eq:proof_mob3}, we have
\begin{align}\label{eq:proof_m_collect}
    &\varepsilon^2 \int_\Omega \Big( m^j u^{E,k}\d_j v^{E,k} - m^k v^{E,j}\d_j u^{E,k} + m^j v^{E,j}_{bl} \d_k u^{E,k} - m^j u^{E,j}\d_k v^{E,k} \Big) \, dx
\end{align}
Using again the irrotationality of $v^E$, the first term rewrites as $m^j u^{E,k}\d_k v^{E,j}$. To avoid boundary terms when integrating by parts, we substitute the decomposition $v^E = v^E_{bl} + v_{bl}$ into the expression \eqref{eq:proof_m_collect}. This allows us to group all boundary-layer-corrected velocities:
\begin{align}\label{eq:proof_m_split}
     &\varepsilon^2 \int_\Omega \Big( m^j u^{E,k}\d_k v^{E,j}_{bl} - m^j v^{E,k}_{bl}\d_k u^{E,j} + m^j v^{E,j}_{bl} \d_k u^{E,k} - m^j u^{E,j}\d_k v^{E,k}_{bl} \Big) \, dx \notag\\
    + &\varepsilon^2 \int_\Omega \Big( m^j u^{E,k}\d_k v^j_{bl} - m^j v^k_{bl}\d_k u^{E,j} - m^j u^{E,j}\d_k v^k_{bl} \Big) \, dx.
\end{align}
We now show that the first integral evaluated at $v^E_{bl}$ is identically zero. We integrate by parts the last two terms of the first line. Since $u^E \cdot n = 0$ and $v^E_{bl} \cdot n = 0$ on $\partial \Omega$, we derive
\begin{align*}
    \int_\Omega m^j v^{E,j}_{bl} \d_k u^{E,k} &- m^j u^{E,j}\d_k v^{E,k}_{bl} \, dx\\
    &= \int_\Omega -\d_k(m^j v^{E,j}_{bl})u^{E,k} + \d_k(m^j u^{E,j})v^{E,k}_{bl} \, dx \\
    &= \int_\Omega \big( - \d_k m^j v^{E,j}_{bl}u^{E,k} - m^j \d_k v^{E,j}_{bl}u^{E,k} + \d_k m^j u^{E,j}v^{E,k}_{bl} + m^j \d_k u^{E,j} v^{E,k}_{bl} \big) \,dx.
\end{align*}
Since the capillary momentum $m$ is irrotational, we have $\d_k m^j = \d_j m^k$. Thus, the two terms involving derivatives of $m$ cancel against each other. The remaining terms cancel the first two terms in the first integral block of \eqref{eq:proof_m_split}. 

Collecting the identities \eqref{eq:proof_kin}--\eqref{eq:proof_m_split} yields the exact decomposition \eqref{eq:three_rem} concluding the proof.
\end{proof}

\subsection{Estimates on $\mathcal{R}_h$}\label{sec estimate R high}

To simplify the presentation in the upcoming bounds, we introduce a notation for the augmented relative energy evaluated at the strong Euler solution, both with and without the boundary layer correction
\begin{notat}
    Whenever it does not create ambiguity, we write
    \begin{equation*}
        \mathcal{E}_h^E(\tau) := \mathcal{E}_h(\tau) \Big|_{(r,U,V) = (\rho^E, u^E, v^E)}.
    \end{equation*}
    On the other hand, we simply write $\mathcal{E}_h(\tau)$ to denote the augmented relative energy evaluated at the corrected functions $(\rho^E, u^E, v^E_{bl})$.
\end{notat}
\begin{prop}\label{prop high R}
Let $\delta = \varepsilon^s$ be the boundary layer thickness. For a.e. $\tau \in (0,T)$, there exists a constant $C>0$ (independent of $\varepsilon$) such that the remainder terms defined in \eqref{eq:three_rem} satisfy the following bounds,
\begin{align}
    |\mathcal{R}_{rel}| &\leq C \mathcal{E}_h^E(\tau) + O(\varepsilon^p), \label{est:R_rel} \\
    |\mathcal{R}_{bl}| &\leq C \mathcal{E}_h^E(\tau) + O(\varepsilon^s), \label{est:R_bl} \\
    |\mathcal{R}_{in}| &\leq C \mathcal{E}_h^E(\tau) + O(\varepsilon^4), \label{est:R_in}
\end{align}
where $p,s>0$ satisfy the following constraints,
\begin{itemize}
    \item For $d=1$, \quad $s < \min\left\{\frac{1}{2}, \frac{5+\alpha}{3(3+\alpha)}\right\}$,\quad \quad \quad $p=\frac{4}{3+\alpha}$,
    \item For $d=2$, \quad $s < \min\left\{\frac{1}{2}, \frac{3+\alpha}{3(2+\alpha)}\right\}$,\quad \quad \quad $p=\frac{2}{2+\alpha}$,
    \item For $d\geq 3$, \quad $s < \min\left\{\frac{1}{2}, \frac{d(1+\alpha)+4}{3(d(1+\alpha)+2)}\right\}$,\quad $p=\frac{4}{d(1+\alpha)+2}$.
\end{itemize}
\end{prop}

\begin{proof}
Let us start with the limit residual $\mathcal{R}_{in}$. Since the Euler solution $(\rho^E, u^E, v^E)$ is sufficiently smooth and bounded away from vacuum, the $L^\infty$-norm of the gradient is bounded by a constant $C$. Moreover, by conservation of mass, $\|\sqrt{\rho}\|_{L^2}^2 = \int_\Omega \rho \, dx \leq C$. Recalling that $J = \rho u$, an application of the Cauchy-Schwarz inequality yields:
\begin{align*}
    |\mathcal{R}_{in}| &\leq \varepsilon^2 \norm{\nabla\left(\mu'(\rho^E)\diver v^E+\frac{1}{2}|v^E|^2\right)}_{L^\infty} \norm{\sqrt{\rho}}_{L^2} \norm{\sqrt{\rho}(u^E-u)}_{L^2}\leq C\varepsilon^2 (\mathcal{E}^E_h)^{1/2}.
\end{align*}
Applying Young's inequality to the right-hand side, we conclude,
\begin{equation}\label{est:proof_R_in}
    |\mathcal{R}_{in}| \leq C\mathcal{E}^E_h + O(\varepsilon^4).
\end{equation}
Let us now focus on $\mathcal{R}_{rel}$. For the first line, dealing with the kinetic and capillary relative velocities, we exploit the uniform bound on the Euler velocity gradient to obtain,
\begin{align*}
    \Bigg| \int_\Omega \Big[ (\Lambda-\sqrt{\rho}u^E)\otimes(\Lambda-\sqrt{\rho}u^E) &+ \varepsilon^2(\nabla\beta(\rho)-\sqrt{\rho}v^E)\otimes(\nabla\beta(\rho)-\sqrt{\rho}v^E) \Big] : \nabla u^E \,dx \Bigg| \\
    &\leq \|\nabla u^E\|_{L^\infty} \int_\Omega \Big( |\Lambda-\sqrt{\rho}u^E|^2 + \varepsilon^2|\nabla\beta(\rho)-\sqrt{\rho}v^E|^2 \Big) \,dx \\
    &\leq C \|\nabla u^E\|_{L^\infty} \mathcal{E}_h^E(\tau) \\
    &\leq C \mathcal{E}_h^E(\tau).
\end{align*}
The next term in $\mathcal{R}_{rel}$ is,
\begin{equation}\label{eq:R_rel_mob}
    \varepsilon^2\int_\Omega (m-\rho v^E)\cdot\Big[\nabla\big(\mu'(\rho^E)-\mu'(\rho)\big)\diver u^E+ \big(\mu'(\rho^E)-\mu'(\rho)\big)\nabla\diver u^E\Big] \,dx.
\end{equation}
For the first term inside the bracket, we note that by definition,
\begin{equation}\label{eq_nabla_mu'}
    \nabla \mu'(\rho) = \frac{\alpha+1}{2} \frac{1}{\sqrt{\rho}}\nabla\beta(\rho).
\end{equation}
 Consequently, pairing the difference of the gradients with the relative momentum $m-\rho v^E = \sqrt{\rho}(\nabla\beta(\rho) - \sqrt{\rho}v^E)$ reconstructs the capillary relative velocity square. Since $\|\diver u^E\|_{L^\infty} \leq C$, we obtain
\begin{align}
    \left| \varepsilon^2\int_\Omega (m-\rho v^E)\cdot\nabla\big(\mu'(\rho^E)-\mu'(\rho)\big)\diver u^E \,dx \right| &\leq C \|\diver u^E\|_{L^\infty} \varepsilon^2 \norm{\nabla\beta(\rho)-\sqrt{\rho}v^E}^2_{L^2} \notag\\
    &\leq C \mathcal{E}^E_h(\tau).\label{est:nabla_mu_term}
\end{align}
For the second term in \eqref{eq:R_rel_mob}, we use the following decomposition,
\begin{align}\label{eq:R_rel_mob2}
    \varepsilon^2\int_\Omega (m-\rho v^E)\cdot \big(\mu'(\rho^E)-\mu'(\rho)\big)\nabla\diver u^E \,dx &= \varepsilon^2\int_\Omega (m-\rho v^E)\cdot \mu'(\rho^E)\nabla\diver u^E \,dx\\
    &\quad - \varepsilon^2\int_\Omega (m-\rho v^E)\cdot \mu'(\rho)\nabla\diver u^E \,dx\notag\\
    &=: A + B.\notag
\end{align}
For the smooth part $A$, bounding the smooth terms  in $L^\infty$, we get
\begin{align*}
    |A| &\leq \varepsilon^2 \norm{\sqrt{\rho}}_{L^2} \norm{\nabla \beta(\rho)-\sqrt{\rho}v^E}_{L^2} \norm{\mu'(\rho^E)\nabla\diver u^E}_{L^\infty} \leq C \varepsilon (\mathcal{E}^E_h)^{1/2} \leq C \mathcal{E}^E_h(\tau) + O(\varepsilon^2).
\end{align*}
On the other hand, for the non-linear part $B$, we absorb the density to form $\sqrt{\rho}\mu'(\rho) \sim \rho^{(\alpha+2)/2}$:
\begin{align}\label{eq:B}
    |B| &\leq \varepsilon^2 \norm{\nabla \beta(\rho)-\sqrt{\rho}v^E}_{L^2} \norm{\rho}_{L^{\alpha+2}}^{\frac{\alpha+2}{2}} \norm{\nabla\diver u^E}_{L^\infty} \leq C \varepsilon (\mathcal{E}^E_h)^{1/2} \norm{\rho}_{L^{\alpha+2}}^{\frac{\alpha+2}{2}}.
\end{align}
By applying Lemma \ref{lemma: GN inequality} to bound the $L^{\alpha+2}$ norm via the capillary energy, and subsequently using Young's inequality, we obtain 
\begin{equation*}
    |B| \leq 
    \begin{cases}
        C \varepsilon^{\frac{2}{3+\alpha}}(\mathcal{E}^E_h)^{\frac{1}{2}} \leq C\mathcal{E}^E_h(\tau) + O\left(\varepsilon^{\frac{4}{3+\alpha}}\right) & \text{if } d=1, \\[6pt]
        C \varepsilon^{\frac{1}{2+\alpha}}(\mathcal{E}^E_h)^{\frac{1}{2}} \leq C\mathcal{E}^E_h(\tau) + O\left(\varepsilon^{\frac{2}{2+\alpha}}\right) & \text{if } d=2, \\[6pt]
        C \varepsilon^{\frac{2}{d(1+\alpha)+2}}(\mathcal{E}^E_h)^{\frac{1}{2}} \leq C\mathcal{E}^E_h(\tau) + O\left(\varepsilon^{\frac{4}{d(1+\alpha)+2}}\right) & \text{if } d \ge 3.
    \end{cases}
\end{equation*}
Similarly, we study the momentum cross-term inside $\mathcal{R}_{rel}$,
\begin{equation}\label{eq:R_rel_J_mob}
    \varepsilon^2\int_\Omega (J-\rho u^E)\cdot\left[\nabla\big(\mu'(\rho)-\mu'(\rho^E)\big)\diver v^E+ \big(\mu'(\rho)-\mu'(\rho^E)\big)\nabla\diver v^E\right] \,dx.
\end{equation}
For the first term, we use again \eqref{eq_nabla_mu'} and the relation $J-\rho u^E = \sqrt{\rho}(\Lambda-\sqrt{\rho}u^E)$. Since $v^E$ is the strong Euler solution, its divergence is uniformly bounded, yielding
\begin{align}\label{est:J_nabla_mu}
    \Bigg| \varepsilon^2\int_\Omega (J-\rho u^E)\cdot \nabla\big(\mu'(\rho)-\mu'(\rho^E)\big)\diver v^E \,dx \Bigg| &\leq C \varepsilon^2 \|\diver v^E\|_{L^\infty} \norm{\nabla\beta(\rho)-\sqrt{\rho}v^E}_{L^2}\norm{\Lambda-\sqrt{\rho}u^E}_{L^2} \notag\\
    &\leq C \varepsilon (\mathcal{E}_h^E)^{1/2}(\mathcal{E}_h^E)^{1/2} \notag\\
    &\leq C \mathcal{E}_h^E(\tau).
\end{align}
On the other hand, the second term in \eqref{eq:R_rel_J_mob} requires the same splitting  used in \eqref{eq:R_rel_mob2}. We bound the regular and the non-linear contributions separately,
\begin{align*}
    \bigg| \varepsilon^2\int_\Omega (J-\rho u^E) \cdot \big(\mu'(\rho)&-\mu'(\rho^E)\big) \nabla\diver v^E \,dx \bigg| \\
    &\leq \varepsilon^2 \norm{\Lambda-\sqrt{\rho}u^E}_{L^2} \Big( \norm{\mu'(\rho^E)}_{L^\infty}\norm{\sqrt{\rho}}_{L^2} + \norm{\sqrt{\rho}\mu'(\rho)}_{L^2} \Big) \norm{\nabla\diver v^E}_{L^\infty} \\
    &\leq C \varepsilon^2 (\mathcal{E}^E_h)^{1/2} \Big( 1 + \norm{\rho}_{L^{\alpha+2}}^{\frac{\alpha+2}{2}} \Big) \\
    &\leq C \mathcal{E}^E_h(\tau) + O(\varepsilon^4) + C \varepsilon^2 (\mathcal{E}^E_h)^{\frac{1}{2}} \norm{\rho}_{L^{\alpha+2}}^{\frac{\alpha+2}{2}}.
\end{align*}
Using again Lemma \ref{lemma: GN inequality} together with Young's inequality, we establish the bounds for the non-linear term. Due to the presence of $\varepsilon^2$ (unlike the capillary term bounded in \eqref{eq:B}), we obtain 
\begin{equation*}
    C \varepsilon^2(\mathcal{E}^E_h)^{\frac{1}{2}}\norm{\rho}_{L^{\alpha+2}}^{\frac{\alpha+2}{2}} \leq
    \begin{cases}
        C \varepsilon^{\frac{5+\alpha}{3+\alpha}}(\mathcal{E}^E_h)^{\frac{1}{2}} \leq C\mathcal{E}^E_h(\tau) + O\left(\varepsilon^{\frac{2(5+\alpha)}{3+\alpha}}\right) & \text{if } d=1, \\[6pt]
        C \varepsilon^{\frac{3+\alpha}{2+\alpha}}(\mathcal{E}^E_h)^{\frac{1}{2}} \leq C\mathcal{E}^E_h(\tau) + O\left(\varepsilon^{\frac{2(3+\alpha)}{2+\alpha}}\right) & \text{if } d=2, \\[6pt]
        C \varepsilon^{\frac{d(1+\alpha)+4}{d(1+\alpha)+2}}(\mathcal{E}^E_h)^{\frac{1}{2}} \leq C\mathcal{E}^E_h(\tau) + O\left(\varepsilon^{\frac{2(d(1+\alpha)+4)}{d(1+\alpha)+2}}\right) & \text{if } d \ge 3.
    \end{cases}
\end{equation*}
Finally, we estimate the relative pressure term.  Exploiting the identity for the relative pressure, $p(\rho|\rho^E) = (\gamma-1)f(\rho|\rho^E)$, we obtain
\begin{align}\label{est:R_rel_pressure}
    \left|\int_\Omega \big[p(\rho)-p(\rho^E)-(\rho-\rho^E)p'(\rho^E)\big]\diver u^E \,dx \right| &= \left|\int_\Omega (\gamma-1)f(\rho|\rho^E)\diver u^E \,dx \right| \notag\\
    &\leq (\gamma-1) \norm{\diver u^E}_{L^\infty} \int_\Omega f(\rho|\rho^E) \,dx \notag\\
    &\leq C \mathcal{E}_h^E(\tau).
\end{align}
To control the relative remainder $\mathcal{R}_{rel}$, we take the minimum over all derived exponents of $O(\varepsilon)$. Thus, the convergence rate is given by
\begin{equation}\label{eq:p_rate_def}
    p := 
    \begin{cases}
        \frac{4}{3+\alpha} & \text{if } d=1, \\[6pt]
        \frac{2}{2+\alpha} & \text{if } d=2, \\[6pt]
        \frac{4}{d(1+\alpha)+2} & \text{if } d \ge 3.
    \end{cases}
\end{equation}
Therefore, the internal remainder satisfies
\begin{equation}\label{eq:R_rel_final_bound}
    |\mathcal{R}_{rel}| \leq C \mathcal{E}_h^E(\tau) + O(\varepsilon^p),
\end{equation}

We now proceed to bound the boundary layer remainder $\mathcal{R}_{bl}$. We begin with the first term involving the time derivative of the corrector. Recalling that  $v^E_{bl} = v^E - v_{bl}$, we split the integral into two terms as follows 
\begin{align}\label{est:R_bl_time_split}
    -\varepsilon^2\int_\Omega (\rho v^E_{bl}-m)\cdot \d_t v_{bl} \,dx &= -\varepsilon^2\int_\Omega (\rho v^E-m)\cdot \d_t v_{bl} \,dx + \varepsilon^2\int_\Omega \rho v_{bl} \cdot \d_t v_{bl} \,dx \notag\\
    &=: \mathcal{R}_{bl}^{1.1} + \mathcal{R}_{bl}^{1.2}.
\end{align}
For the first integral, we use the identity $\rho v^E - m = -\sqrt{\rho}(\nabla\beta(\rho) - \sqrt{\rho}v^E)$. Using the asymptotic bound on the time derivative \eqref{asintotico},  we obtain,
\begin{align*}
    |\mathcal{R}_{bl}^{1.1}| &\leq \varepsilon^2 \norm{\d_t v_{bl}}_{L^\infty} \norm{\sqrt{\rho}}_{L^2} \norm{\nabla\beta(\rho)-\sqrt{\rho}v^E}_{L^2} \leq C \varepsilon^2 \delta^{-1} \left(\varepsilon^{-1}(\mathcal{E}^E_h)^{1/2}\right) \leq  C \varepsilon^{1-s} (\mathcal{E}^E_h)^{1/2}.
\end{align*}
By applying Young's inequality, we absorb the energy term and we get the quadratic asymptotic error,
\begin{equation}\label{est:R_bl_time_1}
    |\mathcal{R}_{bl}^{1.1}| \leq C\mathcal{E}^E_h(\tau) + O(\varepsilon^{2-2s}).
\end{equation}
For the second integral $\mathcal{R}_{bl}^{1.2}$, there are no relative quantities. Using again the asymptotic bound \eqref{asintotico}, we get 
\begin{align}\label{est:R_bl_time_2}
    |\mathcal{R}_{bl}^{1.2}| &\leq \varepsilon^2 \norm{v_{bl}}_{L^\infty} \norm{\d_t v_{bl}}_{L^\infty} \norm{\rho}_{L^1} \leq C \varepsilon^2 \delta^{-2}  = O(\varepsilon^{2-2s}).
\end{align}
Next, we address the boundary layer terms involving $J$. This block consists of three cross-terms,
\begin{equation}\label{est:R_bl_J_split}
    \varepsilon^2\int_\Omega J^j\left(v^{k}_{bl}\d_jv^{k}_{bl} - v^{k}_{bl}\d_jv^{E,k} - v^{E,k}\d_jv^{k}_{bl}\right) \,dx.
\end{equation}
Since the Euler solution is smooth ($\|v^E\|_{L^\infty} + \|\nabla v^E\|_{L^\infty} \leq C$), the most singular contribution arises from the first term. Recalling \eqref{energy inequality}, we have $\|\Lambda\|_{L^2} \leq C$. Thus, an application of the Cauchy-Schwarz inequality yields,
\begin{align}\label{est:R_bl_conv_dom}
    \left| \varepsilon^2\int_\Omega \sqrt{\rho}\Lambda^j v^{k}_{bl}\d_jv^{k}_{bl} \,dx \right| &\leq \varepsilon^2 \norm{\sqrt{\rho}}_{L^2} \norm{\Lambda}_{L^2} \norm{v_{bl}}_{L^\infty} \norm{\nabla v_{bl}}_{L^\infty} \leq C \varepsilon^2 \delta^{-3}  = O(\varepsilon^{2-3s}).
\end{align}
It is straightforward to check that the remaining two cross-terms in \eqref{est:R_bl_J_split} scale respectively as $O(\varepsilon^{2-s})$ and $ O(\varepsilon^{2-2s})$. Since the boundary layer thickness $\delta = \varepsilon^s \ll 1$, these lower-order singularities are absorbed by the dominant  term \eqref{est:R_bl_conv_dom},
\begin{equation}\label{est:R_bl_conv_final}
    \left| \varepsilon^2\int_\Omega J^j\left(v^{k}_{bl}\d_jv^{k}_{bl} - v^{k}_{bl}\d_jv^{E,k} - v^{E,k}\d_jv^{k}_{bl}\right) \,dx \right| = O(\varepsilon^{2-3s}).
\end{equation}
Similarly, for the next term we use the uniform $L^\infty$ bounds on $v^E$ and $\nabla u^E$ and we get
\begin{align}\label{est:R_bl_smooth_cross}
    \left| \varepsilon^2\int_\Omega \rho v_{bl}^j v^{E,k}\partial_j u^{E,k} \,dx \right| &\leq \varepsilon^2 \norm{v^E}_{L^\infty} \norm{\nabla u^E}_{L^\infty} \norm{v_{bl}}_{L^\infty} \norm{\rho}_{L^1} \leq C \varepsilon^2 \delta^{-1}= O(\varepsilon^{2-s}).
\end{align}
Next, we examine the cross-term
\begin{equation}\label{est:R_bl_mob_grad}
    \varepsilon^2\int_\Omega \rho v_{bl}\cdot\nabla\big(\mu'(\rho^E)-\mu'(\rho)\big)\diver u^E \,dx.
\end{equation}
Recalling \eqref{eq_nabla_mu'}, the above integral is bounded as follows,
\begin{align}
    \Bigg| \varepsilon^2\int_\Omega \sqrt{\rho} v_{bl} \cdot \bigg( \frac{\alpha+1}{2} (\nabla\beta(\rho) - \sqrt{\rho}v^E) \bigg) \diver &u^E \,dx \Bigg|\notag\\ &\leq C \varepsilon^2 \norm{\diver u^E}_{L^\infty} \norm{v_{bl}}_{L^\infty} \norm{\sqrt{\rho}}_{L^2} \norm{\nabla\beta(\rho)-\sqrt{\rho}v^E}_{L^2} \notag\\
    &\leq C \varepsilon^2  \delta^{-1} \left( \varepsilon^{-1}(\mathcal{E}^E_h)^{1/2} \right) \label{est:R_bl_mob_final}\\
    &\leq C \varepsilon^{1-s} (\mathcal{E}^E_h)^{1/2}\notag\\
    &\leq C\mathcal{E}^E_h(\tau) + O(\varepsilon^{2-2s})\notag.
\end{align}
We decompose the fifth remainder term of $\mathcal{R}_{bl}$ into two contribution,
\begin{align}\label{est:R_bl_5_split}
     \varepsilon^2\int_\Omega \rho \mu'(\rho^E)v_{bl}\cdot\nabla\diver u^E \,dx - \varepsilon^2\int_\Omega \rho \mu'(\rho) v_{bl}\cdot\nabla\diver u^E \,dx =: \mathcal{R}_{bl}^{5.1} + \mathcal{R}_{bl}^{5.2}.
\end{align}
The first term $\mathcal{R}_{bl}^{5.1}$ is straightforward. Since $\mu'(\rho^E)$ is uniformly bounded, we get,
\begin{equation}\label{est:R_bl_5_1}
    |\mathcal{R}_{bl}^{5.1}| \leq \varepsilon^2 \norm{\mu'(\rho^E)}_{L^\infty} \norm{\nabla\diver u^E}_{L^\infty} \norm{v_{bl}}_{L^\infty} \norm{\rho}_{L^1} \leq C \varepsilon^2 \delta^{-1} = O(\varepsilon^{2-s}).
\end{equation}
For the second term $\mathcal{R}_{bl}^{5.2}$, we have $\rho\mu'(\rho) \sim \rho^{(\alpha+3)/2}$. We split the exponent to use conservation of mass and to apply Lemma \ref{lemma: GN inequality},
\begin{align}\label{est:R_bl_5_2_}
    |\mathcal{R}_{bl}^{5.2}| &\leq \varepsilon^2 \norm{\nabla \diver u^E}_{L^\infty}\norm{v_{bl}}_{L^\infty} \int_\Omega \rho^{(\alpha+3)/2} \,dx \notag\\
    &\leq C \varepsilon^2 \delta^{-1} \int_\Omega \sqrt{\rho} \rho^{\frac{\alpha+2}{2}} \,dx \notag\\
    &\leq C \varepsilon^{2-s} \norm{\sqrt{\rho}}_{L^2}\norm{\rho}_{L^{\alpha+2}}^{\frac{\alpha+2}{2}}.
\end{align}
Using Lemma \ref{lemma: GN inequality}, we obtain
\begin{equation}\label{est:R_bl_5_2_GN}
    |\mathcal{R}_{bl}^{5.2}| \leq 
    \begin{cases}
        C \varepsilon^{1+\frac{2}{3+\alpha}-s} & \text{if } d=1, \\[6pt]
        C \varepsilon^{1+\frac{1}{2+\alpha}-s} & \text{if } d=2, \\[6pt]
        C \varepsilon^{1+\frac{2}{d(1+\alpha)+2}-s} & \text{if } d \ge 3.
    \end{cases}
\end{equation}
Next, we analyze the most singular boundary layer interactions, given by the presence of two derivatives acting on the corrector. Recalling that $\mu'(\rho) = c_\alpha \rho^{(\alpha+1)/2}$ and the identity $J = \sqrt{\rho}\Lambda$, we rewrite the integral explicitly,
\begin{equation}\label{est:R_bl_nabla_diver_def}
    -\varepsilon^2\int_\Omega \mu'(\rho)J\cdot \nabla\diver v_{bl} \,dx = - c_\alpha \varepsilon^2\int_\Omega \rho^{\frac{\alpha+2}{2}} \Lambda\cdot \nabla\diver v_{bl} \,dx.
\end{equation}
Recalling that $\|\Lambda\|_{L^2} \leq C$, and noting from \eqref{asintotico} that $\|\nabla\diver v_{bl}\|_{L^\infty} = O(\delta^{-3}) = O(\varepsilon^{-3s})$, we obtain
\begin{align}\label{est:R_bl_nabla_diver_start}
    \left| \varepsilon^2\int_\Omega \mu'(\rho)J\cdot \nabla\diver v_{bl} \,dx \right| &\leq C \varepsilon^2 \norm{\rho}_{L^{\alpha+2}}^{\frac{\alpha+2}{2}} \norm{\Lambda}_{L^2} \norm{\nabla\diver v_{bl}}_{L^\infty} \notag\\
    &\leq C \varepsilon^2 \delta^{-3}\norm{\rho}_{L^{\alpha+2}}^{\frac{\alpha+2}{2}}   \notag\\
    &= C \varepsilon^{2-3s} \norm{\rho}_{L^{\alpha+2}}^{\frac{\alpha+2}{2}}.
\end{align}
We use again Lemma \ref{lemma: GN inequality} and we derive
\begin{equation}\label{est:R_bl_nabla_diver_GN}
    \left| \varepsilon^2\int_\Omega \mu'(\rho)J\cdot \nabla\diver v_{bl} \,dx \right| \leq 
    \begin{cases}
        C \varepsilon^{1+\frac{2}{3+\alpha}-3s} & \text{if } d=1, \\[6pt]
        C \varepsilon^{1+\frac{1}{2+\alpha}-3s} & \text{if } d=2, \\[6pt]
        C \varepsilon^{1+\frac{2}{d(1+\alpha)+2}-3s} & \text{if } d \ge 3.
    \end{cases}
\end{equation}
Similarly, we consider the next term
\begin{equation}\label{est:R_bl_diver_v_def}
    -\varepsilon^2\int_\Omega \rho\mu'(\rho)\diver v_{bl}\diver u^E \,dx.
\end{equation}
Proceeding exactly as in the estimate \eqref{est:R_bl_5_2_}, we write $\rho\mu'(\rho) \sim \sqrt{\rho}\rho^{(\alpha+2)/2}$ to exploit the conservation of mass. Since $\|\diver u^E\|_{L^\infty}$ is uniformly bounded and $\|\diver v_{bl}\|_{L^\infty} = O(\delta^{-2}) = O(\varepsilon^{-2s})$, we apply the Cauchy-Schwarz inequality to obtain,
\begin{align}\label{est:R_bl_diver_v_start}
    \left| \varepsilon^2\int_\Omega \rho\mu'(\rho)\diver v_{bl}\diver u^E \,dx \right| &\leq C \varepsilon^2 \norm{\diver u^E}_{L^\infty} \norm{\diver v_{bl}}_{L^\infty} \int_\Omega \sqrt{\rho}\rho^{\frac{\alpha+2}{2}} \,dx \notag\\
    &\leq C \varepsilon^2 \delta^{-2} \norm{\sqrt{\rho}}_{L^2} \norm{\rho}_{L^{\alpha+2}}^{\frac{\alpha+2}{2}} \notag\\
    &= C \varepsilon^{2-2s} \norm{\rho}_{L^{\alpha+2}}^{\frac{\alpha+2}{2}}.
\end{align}
Using Lemma \ref{lemma: GN inequality}, we deduce the corresponding asymptotic rates,
\begin{equation}\label{est:R_bl_diver_v_GN}
    \left| \varepsilon^2\int_\Omega \rho\mu'(\rho)\diver v_{bl}\diver u^E \,dx \right| \leq 
    \begin{cases}
        C \varepsilon^{1+\frac{2}{3+\alpha}-2s} & \text{if } d=1, \\[6pt]
        C \varepsilon^{1+\frac{1}{2+\alpha}-2s} & \text{if } d=2, \\[6pt]
        C \varepsilon^{1+\frac{2}{d(1+\alpha)+2}-2s} & \text{if } d \ge 3.
    \end{cases}
\end{equation}
Next, recalling \eqref{eq_nabla_mu'} and the identity $J = \sqrt{\rho}\Lambda$, we address  
\begin{equation*}
    -\varepsilon^2\int_\Omega J\cdot \nabla \mu'(\rho)\diver v_{bl} \,dx = -\frac{\alpha+1}{2}\varepsilon^2 \int_\Omega \Lambda \cdot \nabla\beta(\rho) \diver v_{bl} \,dx.
\end{equation*}
Notice that the energy inequality \eqref{energy inequality} yields the uniform bound $\|\Lambda\|_{L^2} \leq C$ and ensures that $\varepsilon^2\|\nabla\beta(\rho)\|_{L^2}^2 \leq C$. Furthermore, a spatial derivative acting on the corrector yields $\|\diver v_{bl}\|_{L^\infty} = O(\delta^{-2}) = O(\varepsilon^{-2s})$. Combining these singular factors, we obtain 
\begin{align}\label{est:R_bl_J_nabla_mu_start}
    \left| \varepsilon^2\int_\Omega J\cdot \nabla \mu'(\rho)\diver v_{bl} \,dx \right| &\leq C \varepsilon^2 \norm{\diver v_{bl}}_{L^\infty} \norm{\Lambda}_{L^2} \norm{\nabla\beta(\rho)}_{L^2}\leq C \varepsilon^2 \delta^{-2}= O(\varepsilon^{1-2s}).
\end{align}
Finally, we consider the last remainder term of $\mathcal{R}_{bl}$,
\begin{equation}\label{est:R_bl_aux_momentum_def}
    \varepsilon^2 \int_\Omega \Big( m^j u^{E,k}\d_k v^j_{bl} - m^j v^k_{bl}\d_k u^{E,j} - m^j u^{E,j}\d_k v^k_{bl} \Big) \, dx.
\end{equation}
Notice that the most singular contributions in \eqref{est:R_bl_aux_momentum_def} arise when the spatial derivative acts on the corrector, yielding $\|\nabla v_{bl}\|_{L^\infty} = O(\delta^{-2}) = O(\varepsilon^{-2s})$. Bounding the Euler velocity $\|u^E\|_{L^\infty} \leq C$, and recalling that $m = \sqrt{\rho}\nabla\beta(\rho)$, we obtain
\begin{align}\label{est:R_bl_aux_momentum_start}
    \left| \varepsilon^2 \int_\Omega m^j u^{E,k}\d_k v^j_{bl} \,dx \right| 
    &\leq C \varepsilon^2 \norm{u^E}_{L^\infty} \norm{\nabla v_{bl}}_{L^\infty} \norm{\sqrt{\rho}}_{L^2} \norm{\nabla\beta(\rho)}_{L^2} \leq C \varepsilon \delta^{-2}= O(\varepsilon^{1-2s}).
\end{align}
The remaining term involving $\nabla u^E$ contains no derivatives on the corrector, scaling as $O(\varepsilon^2 \delta^{-1} \varepsilon^{-1}) = O(\varepsilon^{1-s})$. Then, we conclude that
\begin{equation}\label{est:R_bl_aux_momentum_final}
    \left| \varepsilon^2 \int_\Omega \Big( m^j u^{E,k}\d_k v^j_{bl} - m^j v^k_{bl}\d_k u^{E,j} - m^j u^{E,j}\d_k v^k_{bl} \Big) \, dx \right| = O(\varepsilon^{1-2s}).
\end{equation}
We must now ensure that all exponents of $O(\varepsilon)$ obtained above are strictly positive. Collecting the derived asymptotic rates and taking the intersection, we deduce
\begin{equation}\label{eq:final_s_conditions_1_2}
    s < \min\left\{ \frac{1}{2}, \, \frac{5+\alpha}{3(3+\alpha)} \right\} \quad \text{for } d=1, \qquad
    s < \min\left\{ \frac{1}{2}, \, \frac{3+\alpha}{3(2+\alpha)} \right\} \quad \text{for } d=2,
\end{equation}
and for general dimensions $d \geq 3$,
\begin{equation}\label{eq:final_s_conditions_3d}
    s < \min\left\{ \frac{1}{2}, \, \frac{d(1+\alpha)+4}{3(d(1+\alpha)+2)} \right\}.
\end{equation}
Under these restrictions, the boundary layer remainder $\mathcal{R}_{bl}$ is strictly controlled by the relative energy up to a vanishing error.
\end{proof}

\subsection{Proof of Theorem \ref{main high}}\label{sec main high}
\begin{proof}
    The conclusion of the theorem is a direct consequence of the relative entropy inequality, Lemma \ref{lem:rel1 high}, combined with the uniform bound on the remainder terms established in Proposition \ref{prop high R}.
    
    First, we bridge the gap between the boundary layer approximation and the Euler solution. We decompose the capillary relative energy square,
    \begin{align*}
        \varepsilon^2\int_\Omega |\nabla\beta(\rho^\varepsilon)-\sqrt{\rho^\varepsilon}v^E|^2 \,dx &\leq 2\varepsilon^2 \int_\Omega |\nabla\beta(\rho^\varepsilon)-\sqrt{\rho^\varepsilon}v^E_{bl}|^2 \,dx + 2\varepsilon^2\int_\Omega |\sqrt{\rho^\varepsilon}v_{bl}|^2 \,dx\\
        &\leq 2\varepsilon^2 \int_\Omega |\nabla\beta(\rho^\varepsilon)-\sqrt{\rho^\varepsilon}v^E_{bl}|^2 \,dx + O(\varepsilon^{2-2s}).  
    \end{align*}
    Taking advantage of this splitting, we bound the relative energy by the boundary layer relative energy as follows:
    \begin{equation}\label{eq:final_energy_split}
        \mathcal{E}(\rho^\varepsilon,\Lambda^\varepsilon| \rho^E , u^E, v^E )(\tau) \leq 2 \mathcal{E}(\rho^\varepsilon,\Lambda^\varepsilon| \rho^E , u^E, v^E_{bl} )(\tau) + O(\varepsilon^{2-2s}).
    \end{equation}
    Now, using the remainder bounds from Proposition \ref{prop high R} into the relative energy inequality \eqref{high relative entropy} and applying Gronwall's Lemma, we obtain for almost every $\tau\in (0,T)$:
    \begin{equation}\label{eq:gronwall_proof}
        \mathcal{E}(\rho^\varepsilon,\Lambda^\varepsilon| \rho^E , u^E, v^E_{bl} )(\tau) \leq \Big( \mathcal{E}(\rho^\varepsilon_0,\Lambda^\varepsilon_0| \rho^E_0 , u^E_0, v^E_{bl}(0) ) + O(\varepsilon^\kappa) \Big) e^{C\tau},
    \end{equation}
    where $\kappa = \min\{p, s\} > 0$. Applying the same splitting as in \eqref{eq:final_energy_split} to the initial data, we get
    \begin{equation*}
        \mathcal{E}(\rho^\varepsilon_0,\Lambda^\varepsilon_0| \rho^E_0 , u^E_0, v^E_{bl}(0) ) \leq 2\mathcal{E}(\rho^\varepsilon_0,\Lambda^\varepsilon_0| \rho^E_0 , u^E_0, v^E_0 ) + O(\varepsilon^{2-2s}).
    \end{equation*}
    Substituting this initial bound into \eqref{eq:gronwall_proof} and then into \eqref{eq:final_energy_split}, and noting that $O(\varepsilon^{2-2s})$ is absorbed by $O(\varepsilon^\kappa)$, we deduce
    \begin{equation}
         \mathcal{E}(\rho^\varepsilon,\Lambda^\varepsilon| \rho^E , u^E, v^E )(\tau) \leq C \Big( \mathcal{E}(\rho^\varepsilon_0,\Lambda^\varepsilon_0| \rho^E_0 , u^E_0, v^E_0 ) + O(\varepsilon^\kappa) \Big) e^{C\tau}.
    \end{equation}
    Recalling \eqref{eq:high_ini_data}, the initial data are such that the initial relative energy $\mathcal{E}(\rho^\varepsilon_0,\Lambda^\varepsilon_0| \rho^E_0 , u^E_0, v^E_0 )$ vanishes as $\varepsilon \to 0$. Therefore, we conclude that the relative energy $\mathcal{E}(\rho^\varepsilon,\Lambda^\varepsilon| \rho^E , u^E, v^E )$ goes to zero. Finally, taking advantage of Lemma \ref{lem:convergenza 1_gamma}, we derive the strong convergences
    \begin{multline*}
        \norm{\rho^\varepsilon - \rho^E}_{L^\infty([0,T]; L^1 \cap L^\gamma(\Omega))} 
        + \norm{\Lambda^\varepsilon - \sqrt{\rho^\varepsilon} u^E}_{L^\infty([0,T]; L^2(\Omega))} \\
        + \varepsilon\norm{\nabla\beta(\rho^\varepsilon) - \sqrt{\rho^\varepsilon} v^E}_{L^\infty([0,T]; L^2(\Omega))} \longrightarrow 0 \quad \text{as } \varepsilon \to 0,
    \end{multline*}
    completing the proof of Theorem \ref{main high}.
\end{proof}

\section{Appendix}
\subsection{Extending the Admissible Class of Test Functions}

Using standard density arguments, we can considerably extend the class of admissible test functions $(r, U)$ appearing in the relative energy inequality \eqref{relative entropy}. Indeed, for the left-hand side to be well-defined and the relative energy to be finite, the functions must belong at least to the class $r \in L^\gamma([0,T]\times\Omega)$ and $U \in L^{\frac{2\gamma}{\gamma-1}}([0,T]\times\Omega)$.

Furthermore, a short inspection of the remainder $\mathcal{R}(\rho,\Lambda, r, U)$ implies that all integrals are well-defined provided the test functions satisfy, at least, the following regularity conditions:
\begin{align}
    &\partial_t U \in L^{\frac{2\gamma}{\gamma-1}}([0,T]\times\Omega), \\
    &\nabla U \in L^\infty([0,T]\times\Omega), \quad \diver U \in L^\infty([0,T]\times\Omega), \\
    &\nabla(\diver U) \in 
    \begin{cases}
        L^{\frac{2\gamma}{\gamma-\alpha-2}}([0,T]\times\Omega) & \text{if } \gamma > \alpha+2, \\
        L^\infty([0,T]\times\Omega) & \text{otherwise},
    \end{cases} \\
    &\partial_t f'(r) \in L^{\frac{\gamma}{\gamma-1}}([0,T]\times\Omega), \\
    &\nabla f'(r) \in L^{\frac{2\gamma}{\gamma-1}}([0,T]\times\Omega).
\end{align}
Similarly, when considering the augmented relative energy framework \eqref{eq:aug_rel_b}, we have some additional constraints. A direct application of Hölder's inequality reveals that the augmented formulation is well-defined if the auxiliary velocity field $V$ belongs to the class:
\begin{align}
    &V \in L^{\frac{2\gamma}{\gamma-1}}([0,T]\times\Omega), \quad \partial_t V \in L^{\frac{2\gamma}{\gamma-1}}([0,T]\times\Omega), \\
    &\nabla V \in L^\infty([0,T]\times\Omega), \quad \diver V \in L^\infty([0,T]\times\Omega), \\
    &\nabla(\diver V) \in L^\infty([0,T]\times\Omega).
\end{align}
Consequently, the relative energy inequalities hold for any test functions and auxiliary velocities satisfying the above minimal integrability requirements.

\textbf{Funding.} The authors are partially supported by the PRIN 2022 Project
2022YXWSLR "Boundary analysis for dispersive and viscous fluids”. The first author also acknowledges partial support from Istituto Nazionale di Alta Matematica through the GNAMPA Research Group, by the Italian Ministry of University and Research (MUR)
through the Excellence Department Project awarded to GSSI, CUP D13C22003740001.
Most of this work was done while the second author was at GSSI.

\textbf{Conflict of interest.} The authors declare that they have no conflict of interest.

\textbf{Data availability.} Data sharing is not applicable.
We do not analyse or generate any datasets, because our work proceeds within a theoretical and mathematical approach.

\bibliographystyle{plain}
\bibliography{biblio_kato_qhd}

\end{document}